\newlist{steps}{enumerate}{1}
\setlist[steps, 1]{label = Step \arabic*:}
\theoremstyle{plain}
\newtheorem{theorem}{Theorem}[section]
\newtheorem{proposition}[theorem]{Proposition}
\newtheorem{lemma}[theorem]{Lemma}
\newtheorem{corollary}[theorem]{Corollary}
\theoremstyle{definition}
\newtheorem{definition}[theorem]{Definition}
\newtheorem{example}[theorem]{Example}
\theoremstyle{remark}
\newtheorem{remark}[theorem]{Remark}
\numberwithin{equation}{section}
\begin{document}
		 
	\begin{center} {\large{\bf Projections in the Algebra generated by an $n$-Potent Operator}}	
    
		\bigskip
		{\sc Monika$^1$, Priyadarshi Dey$^2$, Zachary Easley$^3$}
		
		\medskip
		
		$^1$Hampton University, Hampton, VA, USA\\
		$^2$ Millsaps College, Jackson, Mississippi, USA\\
        $^3$ University of Memphis, Memphis, Tennessee, USA

		
		\medskip
		fnu.monika@hamptonu.edu, deyp@millsaps.edu, zkeasley@memphis.edu

	\end{center}
	
	\medskip\noindent
    \begin{abstract}
This paper investigates the projection operators that lie in the algebra generated by powers of an $n$-potent operator $T$ on a complex Banach space, where $T^n = T$. We give a complete description of all projections in the algebra $\operatorname{comb}(T) = \text{span}\{T, T^2, \dots, T^{n-1}\}$, and prove that each such projection is uniquely determined by, and in bijection with, a subset of the nonzero spectrum of $T$. As a consequence, the family of projections in $\operatorname{comb}(T)$ forms a Boolean algebra isomorphic to the power set of $\sigma(T)\setminus\{0\}$. We also establish a spectral decomposition for $n$-potent operators in terms of their Riesz projections and derive explicit formulas for the associated Riesz projections using resolvent expansions. We give an illustration of the theory for $5$-potent operators, which highlights the algebraic and spectral structure of finite-order operators on Banach spaces.

\end{abstract}
\vspace{1em}
\noindent\textbf{Keywords:} $n$-potent operators; spectral projections; Riesz projections; Boolean algebra; Spectral decomposition; Projections in the convex hull of isometries; Periodic linear isometries; Finite spectrum; Banach spaces.
\medskip

\noindent\textbf{Mathematics Subject Classification (2020):} 47A10, 47A15, 47A65, 47B15, 46H15.

\section{Introduction}

The structure of projections plays a fundamental role in operator theory has due to its connection with spectral theory, invariant subspace decomposition, and ergodic theory. In this paper we study operators satisfying $T^n=T$ for some positive integer $n \ge 2$, called $n$-potent operators, arise in several contexts: as periodic transformations in dynamical systems \cite{EFHN}, as operators with finite spectrum in the study of invariant subspaces \cite{RR}, and as generalizations of projections (the case $n=2$).

Previous work by Bikchentaev and Yakushev \cite{BY} provided representations for tri-potent operators ($n=3$) and showed connections to convex combinations of isometries. More recently, Ili\v{s}evi\'{c}, Li, and Poon \cite{ILP} studied generalized circular projections, which share structural similarities with the spectral decompositions of $n$-potent operators. Along the same line, projections in the convex hull of isometries on function spaces have been studied in \cite{BDE,HJ}, with particular focus on absolutely continuous function spaces. For cyclic group of isometries of finite order, the unique projection in the convex hull is given by the average of all group elements \cite{BDE}. This connects to $n$-potent operators in the special case where $T$ is periodic (see \autoref{MainThm1}(iii)).

While the spectral structure of $n$-potent operators is well understood, a systematic description of projection operators belonging to the algebra generated by $\{T,T^2, T^3, \dots, T^{n-1}\}$ has not been explicitly characterized. Such a characterization is of interest for several reasons: it provides a complete description of invariant subspace decompositions constructable from the powers of $T$ \cite{RR}, yields explicit computational formulas for spectral projections without evaluating the contour integration, and for dynamical system applications \cite{EFHN}.

Throughout this paper, $X$ denotes a complex Banach space and $\mathcal{B}(X)$ the algebra of bounded linear operators on $X$. For an $n$-potent operator $T \in \mathcal{B}(X)$, we consider the finite-dimensional subalgebra
\[\operatorname{comb}(T):= \operatorname{span}\{T,T^2,\dots,T^{n-1}\}=\left\{\sum_{i=1}^{k}a_iT^{i}: a_i \in \mathbb{C}, 1\le k \le n-1\right\}.\]
Since $T^n=T$, every polynomial in $T$ reduces to an element of $\operatorname{comb}(T)$. 

Our main result provides a complete characterization of projections in $\operatorname{comb}(T)$. We show that each such projection is uniquely determined by a subset of the nonzero spectrum of $T$ (\autoref{thm:proj-conv-comb-result}), yielding a Boolean algebra of projections isomorphic to $\mathcal{P}(\sigma(T) \setminus \{0\})$ (Corollary~\ref{boolean-algebra-isomorphism}). The paper also gives explicit formulas for the associated Riesz projections (\autoref{thm:spectrumandeigenprojection}) and illustrates the theory through finite and infinite-dimensional examples, including a complete analysis of the $5$-potent case (Section~\ref{sec3-examples}).

\section{Preliminaries for $n$-potent Operators}\label{sec1:prelim}

In this section we establish the fundamental spectral properties of $n$-potent operators and also develop theory of Riesz projections that will be essential for this paper. We show that the spectrum of $n$-potent operators is finite and consists of semisimple eigenvalues.  
\begin{definition}
A bounded linear operator $T$ on $X$ is said to be an $n$-potent operator, or simply $n$-potent, if there exists an integer $n \geq 2$ such that $T^n = T.$  
 \end{definition}
 Evidently, if $T$ is projection on $X$, then $T$ is a $2$-potent operator on $X$ (also referred to as an ``idempotent"). Similarly, if $T$ is a periodic operator with period $m$ (that is, $T^m=I$), then $T$ is $(m+1)$-potent. 
\medskip

In what follows, we will always assume that $n$ is an integer, at least $2$, and that $T \in \mathcal{B}(X) \setminus \{ 0, I \}$, where $I$ is the identity operator on $X$. We denote the spectrum of $T$ by $\sigma(T)$ and the point spectrum of $T$ by $\sigma_p (T)$. We recall that if $\lambda \in \sigma_p (T)$, then $T - \lambda I$ has non-trivial Kernel, i.e., $\lambda$ is an eigenvalue for $T$. We set $\mu_n := \{e^{\frac{2 \pi i k}{n}} : k = 0,\dots,n-1 \}$ to denote the complete set of $n^{th}$ roots of unity.
\medskip

Let $\lambda$ be an eigenvalue for $T$. The generalized eigenspace for $T$ associated with $\lambda$ is defined as:
   
        \[E_{\lambda} := \bigcup_{k=1}^{\infty} \operatorname{Ker}((T - \lambda I)^k).\]
   
The dimension of $E_{\lambda}$, when finite, is called the algebraic multiplicity of $\lambda$, while $\operatorname{dim}(\operatorname{Ker}(T-\lambda I))$ is the geometric multiplicity for $\lambda$. The eigenvalue $\lambda$ is said to be semisimple if $\operatorname{Ker}((T- \lambda I)^2)=\operatorname{Ker}(T- \lambda I)$ and is said to be simple if $\operatorname{dim}(\operatorname{Ker}(T-\lambda I))=1$.

\medskip

An operator $T$ on $X$ is said to be an \textit{algebraic operator} (see e.g. \cite{IK}, \cite{TK}) if there exists a polynomial $p(t)$ with coefficients in $\mathbb{C}$ so that $p(T)x = 0$ for all $x \in X$. We refer to such a polynomial $p(t)$ as an \textit{annihilating polynomial} for $T$. The \textit{minimal} polynomial, $m(t)$, for an operator $T \in \mathcal{B}(X)$ is the monic polynomial of generates the principal ideal $\mathfrak{P} := \{ p \in \mathbb{C}[t] : p(T)x=0 \, \, \mathrm{for \ all} \, \, x \in X \}.$ It is worthy to note that every algebraic operator $T \in \mathcal{B}(X)$ has a unique minimal polynomial, since $\mathfrak{P}$ forms a non-zero ideal in the principal ideal ring $\mathbb{C}[z]$. On the other hand, if $T$ is an $n$-potent operator, then $T$ is algebraic with an annihilating polynomial $p(z)=z^n-z$. 

\medskip
The following Proposition establishes that the spectrum of an $n$-potent operator is finite and is equal to the point-spectrum of the operator (i.e., the spectrum comprises only of eigenvalues) and hence each eigenvalue is isolated.

\begin{proposition}
\label{SpecOfnPotentOpProp}
    Let $T$ be an $n$-potent operator on $X$. Then, the spectrum of $T$ is finite and consists of semisimple eigenvalues of $T$. In particular, $\sigma(T) \subseteq \mu_{n-1} \cup \{ 0\}$ and $\sigma(T) = \sigma_p(T)$. 
\end{proposition}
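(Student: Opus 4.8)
The plan is to exploit that an $n$-potent $T$ is an algebraic operator whose annihilating polynomial $p(z)=z^n-z$ is \emph{separable}. First I would factor $p(z)=z\prod_{\zeta\in\mu_{n-1}}(z-\zeta)$ and note that its $n$ roots are pairwise distinct, since $0\notin\mu_{n-1}$ and the $(n-1)$-th roots of unity are themselves distinct; hence $p$ has no repeated roots. Because $p\in\mathfrak{P}$ and the minimal polynomial $m$ generates $\mathfrak{P}$, we have $m\mid p$, so $m$ is also separable, say $m(z)=\prod_{\lambda\in\Lambda}(z-\lambda)$ with $\Lambda\subseteq\mu_{n-1}\cup\{0\}$ a finite set of distinct points. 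This already pins down the candidate spectrum and its finiteness.

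Next I would identify $\sigma(T)=\Lambda=\sigma_p(T)$, which yields simultaneously that $\sigma(T)$ is finite, is contained in $\mu_{n-1}\cup\{0\}$, and equals $\sigma_p(T)$. For $\lambda\notin\Lambda$: polynomial division gives $m(z)=(z-\lambda)q(z)+m(\lambda)$ with $m(\lambda)\neq 0$, so evaluating at $T$ and using $m(T)=0$ produces $(T-\lambda I)q(T)=q(T)(T-\lambda I)=-m(\lambda)I$, whence $T-\lambda I$ is invertible and $\lambda\notin\sigma(T)$. For $\lambda\in\Lambda$: write $m(z)=(z-\lambda)\widetilde m(z)$ with $\deg\widetilde m<\deg m$, so $\widetilde m(T)\neq 0$ by minimality; choosing $x$ with $\widetilde m(T)x\neq 0$, the vector $\widetilde m(T)x$ is a nonzero element of $\operatorname{Ker}(T-\lambda I)$ because $(T-\lambda I)\widetilde m(T)=m(T)=0$. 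Hence $\Lambda\subseteq\sigma_p(T)\subseteq\sigma(T)\subseteq\Lambda$, giving equality throughout.

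For semisimplicity I would use that each $\lambda\in\sigma(T)$ is a \emph{simple} root of $m$, so $\gcd\big((z-\lambda)^2,\,m(z)\big)=z-\lambda$; by Bézout there are polynomials $a,b$ with $a(z)(z-\lambda)^2+b(z)m(z)=z-\lambda$, and evaluating at $T$ gives $a(T)(T-\lambda I)^2=T-\lambda I$. Consequently $(T-\lambda I)^2x=0$ forces $(T-\lambda I)x=0$, i.e.\ $\operatorname{Ker}((T-\lambda I)^2)=\operatorname{Ker}(T-\lambda I)$; running the same argument with $(z-\lambda)^k$ shows more generally $\operatorname{Ker}((T-\lambda I)^k)=\operatorname{Ker}(T-\lambda I)$, so in fact $E_\lambda=\operatorname{Ker}(T-\lambda I)$. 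This establishes that every eigenvalue is semisimple and finishes the proof.

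I do not expect a genuine obstacle here; the only delicate bookkeeping is verifying separability of $z^n-z$ (so that $m$ inherits it) and treating the cases $0\in\sigma(T)$ and $0\notin\sigma(T)$ uniformly through the single inclusion $\sigma(T)\subseteq\mu_{n-1}\cup\{0\}$. As an alternative I would mention the Lagrange/Riesz route: define $P_\lambda=\prod_{\mu\in\Lambda\setminus\{\lambda\}}(T-\mu I)(\lambda-\mu)^{-1}$ for $\lambda\in\Lambda$, check via Lagrange interpolation that $\sum_{\lambda}P_\lambda=I$, $P_\lambda P_{\mu}=0$ for $\lambda\neq\mu$ (each such product contains the factor $m(T)=0$), and $P_\lambda\neq 0$ (by minimality of $m$), with $\operatorname{Ran}(P_\lambda)=\operatorname{Ker}(T-\lambda I)$. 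This gives $X=\bigoplus_{\lambda\in\Lambda}\operatorname{Ker}(T-\lambda I)$ in one stroke — finiteness, $\sigma(T)=\sigma_p(T)$, and semisimplicity all at once — and it has the advantage of producing exactly the spectral idempotents used later for the Riesz-projection formulas.
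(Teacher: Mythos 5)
Your argument is correct and complete; every step checks out, including the Bézout step for semisimplicity (since $\lambda$ is a simple root of the separable minimal polynomial $m$, $\gcd\big((z-\lambda)^2,m(z)\big)=z-\lambda$, and evaluating the Bézout identity at $T$ does force $\operatorname{Ker}((T-\lambda I)^2)=\operatorname{Ker}(T-\lambda I)$). The route is genuinely different from the paper's in its mechanics, though both rest on $T$ being algebraic with the separable annihilator $z^n-z$. The paper obtains $\sigma(T)\subseteq\mu_{n-1}\cup\{0\}$ by invoking the Spectral Mapping Theorem, whereas you prove it from scratch by exhibiting $-q(T)/m(\lambda)$ as an explicit inverse of $T-\lambda I$ when $m(\lambda)\neq 0$; your version is more elementary and simultaneously delivers $\sigma(T)=\Lambda=\sigma_p(T)$ in one sweep, while the paper has to argue $\sigma(T)\subseteq\sigma_p(T)$ separately by factoring the minimal polynomial as $(z-\lambda)^m q(z)$ and producing an eigenvector of the form $(T-\lambda I)^{m-1}x$. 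For semisimplicity the paper takes $y\in\operatorname{Ker}((T-\lambda I)^2)$, sets $x=(T-\lambda I)y$, and kills $x$ via $q(T)x=q(\lambda)x=0$ with $q(\lambda)\neq 0$; your Bézout identity is the structural version of the same idea and has the added benefit of giving $E_\lambda=\operatorname{Ker}(T-\lambda I)$ for all powers at once. Your sketched alternative via the Lagrange idempotents $P_\lambda=\prod_{\mu\in\Lambda\setminus\{\lambda\}}(T-\mu I)(\lambda-\mu)^{-1}$ is also sound and in fact anticipates the explicit projection formulas the paper derives later (Theorem \ref{thm:spectrumandeigenprojection}) by contour integration, so either write-up would serve; the only thing to make explicit if you adopt the minimal-polynomial route wholesale is the standing fact that an algebraic operator has a well-defined monic minimal polynomial, which the paper records in its preliminaries.
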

\begin{proof}
    If $T$ is an $n$-potent operator, then the polynomial $p(t) = t^n - t$ is an annihilating polynomial for $T$. By the Spectral Mapping Theorem (\cite{GPR}, p. 263), since $p(T) = 0$, we see that $\sigma(p(T)) = 0 = p (\sigma(T))$, implying that each element of the spectrum of $T$ is a root of $p(t)$.
    Since the roots of $p(t)$ are among the $(n-1)^{th}$ roots of unity  or $0$, it follows that $\sigma(T) \subseteq \mu_{n-1} \cup \{ 0 \}$, and in particular, $\sigma(T)$ is finite with cardinality at most $n$.
    \medskip
    
 To show that $\sigma(T) = \sigma_p(T)$, it is enough to show $\sigma(T) \subseteq \sigma_p(T)$. To this end, let $\lambda \in \sigma(T)$. Since $T$ is algebraic, let $p(z) \in \mathbb{C}[z]$ be the minimal polynomial for $T$. Then there exists a polynomial $q \in \mathbb{C}[z]$ such that $p(T)=(T-\lambda I)^mq(T)=0$ where $q(\lambda) \neq 0$ and $m \geq 1$ is the multiplicity of $\lambda$. It implies that $Ker\left((T-\lambda I)^m\right) \neq \{0\}$ and let $x \in Ker\left((T-\lambda I)^m\right)$. Define a non-zero vector $v:= (T-\lambda I)^{m-1}x.$ It is easy to check that, $Tv=\lambda v$, which implies $\lambda \in \sigma_{p}(T).$ To finish the proof, it remains to show that $\lambda \in \sigma(T)$ is a semisimple eigenvalue of $T$; that is $\operatorname{Ker}((T-\lambda I)^2) \subseteq \operatorname{Ker}(T-\lambda I)$. Let $y \in \operatorname{Ker}((T-\lambda I)^2)$, which implies, $(T-\lambda I)\big((T-\lambda I)(y)\big)=0$; set $x:=(T-\lambda I)(y).$ We will prove that $x=0$, that is $y \in \operatorname{Ker}(T-\lambda I)$. Since $p(t)=t(t^{n-1}-1)$ is an annihilating polynomial for $T$ and has no repeated roots, hence, $p(t)= (t-\lambda)q(t),$ for some $q(z) \in \mathbb{C}[z]$ with $q(\lambda) \ne 0$. Then $p(T)= (T-\lambda I)q(T)=0$. We evaluate $p(T)$ at $y \in X$ and use commutativity to get $q(T)x=0$. Since $x \in \operatorname{Ker}(T-\lambda I),$ we get 
 \[q(T)x=q(\lambda)x=0.\]
 But since $q(\lambda)\ne 0$, then $x=0$ and hence $\lambda$ is semisimple; which completes the proof of the Proposition. 
\end{proof}

We recall the definition of the following operator, which is very central to our discussion for the remainder of the paper. It is called the Riesz projection, see \cite{GGK} for more details. 
\begin{definition}(see \cite{GGK}, p.~9)
   \label{DefOfEigenProj}
   Let $T \in \mathcal{B}(X)$. Suppose that the spectrum of $T$, $\sigma(T)$, is the disjoint union of two non-empty closed subsets $\sigma$ and $\tau$, and let $\Gamma$ be a Cauchy contour in the resolvent set of $T$ enclosing $\sigma$ and separating it from $\tau = \sigma(T) \setminus \sigma$. The operator defined by
   \begin{align}
       \label{eigenprojeqforsigma}
       P_{\sigma} := -\frac{1}{2 \pi i} \int_{\Gamma} (T - z I)^{-1} \, dz
   \end{align}
   is called the Riesz projection of $T$ corresponding to $\sigma$.
\end{definition}
In practice, we are particularly interested in computing the Riesz projection associated to a single point in the spectrum of $T$, that is, when  $\sigma = \{ \lambda \}$. In this case, we denote the projection either by $P_{\{\lambda\}}$ or $P_{\lambda}$.

Since the operator $(T-zI)^{-1}$ is analytic on $\rho(T)$, it can be shown that the integral in (\ref{eigenprojeqforsigma}) does not depend on $\Gamma$. The next proposition shows several important properties of the Riesz projection as defined in  the Definition \ref{DefOfEigenProj}. We refer the reader to \cite{GGK} for further details about the Riesz projections. 

We recall the following theorem from \cite{GGK}, which will be useful in establishing key properties of the Riesz projection. The proof is omitted here and can be found in \cite[Theorem~2.2, p.~10]{GGK}. 

\begin{theorem}\label{theorem:inv-subspace}
Let $T \in \mathcal{B}(X)$, and let $\sigma$ and $\tau = \sigma(T) \setminus \sigma$ be two disjoint, non-empty, closed subsets of the spectrum $\sigma(T)$. Define $R_{\sigma} = \operatorname{Ran}(P_\sigma)$ and $K_{\sigma} = \operatorname{Ker}(P_\sigma)$. Then:
\begin{itemize}
    \item $X = R_{\sigma} \oplus K_{\sigma}$,
    \item $R_{\sigma}$ and $K_{\sigma}$ are $T$-invariant subspaces, and
    \item $\sigma(T|_{R_\sigma}) = \sigma$, \quad $\sigma(T|_{K_\sigma}) = \sigma(T) \setminus \sigma$.
\end{itemize}
\end{theorem}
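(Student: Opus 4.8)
The plan is to reprove this as the classical Riesz (spectral) decomposition, working directly with the resolvent rather than with the holomorphic functional calculus. It is convenient to use the sign-flipped form $P_{\sigma}=\frac{1}{2\pi i}\int_{\Gamma}(zI-T)^{-1}\,dz$ of the defining integral \eqref{eigenprojeqforsigma}, and to fix disjoint open neighbourhoods $U_{\sigma}\supseteq\sigma$ and $U_{\tau}\supseteq\tau$ with disjoint closures. Since $\sigma(T)=\sigma\sqcup\tau$, we get $U_{\sigma}\cap\sigma(T)=\sigma$, so any Cauchy contour $\Gamma\subseteq U_{\sigma}$ surrounding $\sigma$ automatically lies in $\rho(T)$; shrinking $U_{\sigma}$ to a union of small disks about the points of $\sigma$, we may moreover assume $\Gamma$ encloses nothing outside $U_{\sigma}$, which will be used below.

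First I would show $P_{\sigma}$ is an idempotent that commutes with $T$, from which the first two bullets follow. Commutation is immediate upon integrating $T(zI-T)^{-1}=(zI-T)^{-1}T$ along $\Gamma$. For $P_{\sigma}^{2}=P_{\sigma}$, choose a second Cauchy contour $\Gamma'$ surrounding $\sigma$ inside the region bounded by $\Gamma$, write $P_{\sigma}^{2}$ as the iterated integral over $\Gamma\times\Gamma'$, insert the resolvent identity $(zI-T)^{-1}(wI-T)^{-1}=(w-z)^{-1}\big[(zI-T)^{-1}-(wI-T)^{-1}\big]$, and split by Fubini; the term carrying $(zI-T)^{-1}$ vanishes since $\int_{\Gamma'}(w-z)^{-1}\,dw=0$ for $z\in\Gamma$ (which lies outside $\Gamma'$), while the term carrying $(wI-T)^{-1}$ produces, after collecting signs via $\int_{\Gamma}(z-w)^{-1}\,dz=2\pi i$ (as $w\in\Gamma'$ is enclosed by $\Gamma$), exactly $P_{\sigma}$ again. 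A bounded idempotent yields the topological direct sum $X=\operatorname{Ran}(P_{\sigma})\oplus\operatorname{Ker}(P_{\sigma})=R_{\sigma}\oplus K_{\sigma}$, and commutation with $T$ makes both summands $T$-invariant.

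For the spectral identities I would first prove the inclusions $\sigma(T|_{R_{\sigma}})\subseteq\sigma$ and $\sigma(T|_{K_{\sigma}})\subseteq\tau$. Fix $\lambda\notin\sigma$, so (after the shrinking above) $\lambda$ is not enclosed by $\Gamma$, and set $A_{\lambda}:=\frac{1}{2\pi i}\int_{\Gamma}(z-\lambda)^{-1}(zI-T)^{-1}\,dz$. Using $(zI-T)^{-1}(T-\lambda I)=-I+(z-\lambda)(zI-T)^{-1}$ together with $\int_{\Gamma}(z-\lambda)^{-1}\,dz=0$, one gets $A_{\lambda}(T-\lambda I)=P_{\sigma}$, and by commutativity also $(T-\lambda I)A_{\lambda}=P_{\sigma}$; since $A_{\lambda}$ commutes with $P_{\sigma}$ it preserves $R_{\sigma}$, where $P_{\sigma}$ acts as the identity, so $A_{\lambda}|_{R_{\sigma}}$ is a two-sided inverse of $(T-\lambda I)|_{R_{\sigma}}$ and $\lambda\notin\sigma(T|_{R_{\sigma}})$. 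To handle $K_{\sigma}$, observe that $P_{\sigma}+P_{\tau}=I$ — deform a circle of radius exceeding $\|T\|$, on which $(zI-T)^{-1}$ has the Neumann expansion $\sum_{k\ge0}T^{k}z^{-k-1}$ and hence integrates to $I$, onto $\Gamma\cup\Gamma_{\tau}$ — so $K_{\sigma}=\operatorname{Ker}(P_{\sigma})=\operatorname{Ran}(P_{\tau})$, and the same argument with $\sigma$ and $\tau$ interchanged gives $\sigma(T|_{K_{\sigma}})\subseteq\tau$. Finally, if $\lambda$ lies outside both $\sigma(T|_{R_{\sigma}})$ and $\sigma(T|_{K_{\sigma}})$, then $x\mapsto(T|_{R_{\sigma}}-\lambda)^{-1}P_{\sigma}x+(T|_{K_{\sigma}}-\lambda)^{-1}(I-P_{\sigma})x$ inverts $T-\lambda I$, so $\sigma(T)\subseteq\sigma(T|_{R_{\sigma}})\cup\sigma(T|_{K_{\sigma}})\subseteq\sigma\cup\tau=\sigma(T)$; equality throughout, combined with the already-proved inclusions and $\sigma\cap\tau=\emptyset$, forces $\sigma(T|_{R_{\sigma}})=\sigma$ and $\sigma(T|_{K_{\sigma}})=\tau$ (and in particular $R_{\sigma}\neq\{0\}\neq K_{\sigma}$, since $\sigma,\tau\neq\emptyset$).

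The only genuinely technical point is the contour bookkeeping: the existence of the nested pair $\Gamma'\subseteq\Gamma$ and of a $\Gamma$ enclosing $\sigma$ but not the given $\lambda$, the Fubini interchange in the double contour integral, and the deformation of a large circle onto $\Gamma\cup\Gamma_{\tau}$. All of these follow from Cauchy's theorem and the analyticity of $z\mapsto(zI-T)^{-1}$ on $\rho(T)$, so I expect no essential obstruction — the argument is formal once the contours are set up correctly.
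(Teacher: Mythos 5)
Your argument is correct, but note that the paper does not actually prove \autoref{theorem:inv-subspace}: it is quoted from \cite{GGK} (Theorem~2.2, p.~10) with the proof explicitly omitted. What you have written is the classical Riesz decomposition argument, and it is essentially the same proof as in the cited source; moreover, two of its key steps are reproduced almost verbatim elsewhere in the paper, namely the double-contour/resolvent-identity computation for $P_\sigma^2=P_\sigma$ and the large-circle Neumann-series argument for $P_\sigma+P_\tau=I$, both of which appear in the proof of Proposition~\ref{basicpropsofeigenprojectionsprop}. The only ingredient not already present in the paper is your operator $A_\lambda=\frac{1}{2\pi i}\int_\Gamma (z-\lambda)^{-1}(zI-T)^{-1}\,dz$ giving an explicit inverse of $(T-\lambda I)|_{R_\sigma}$ for $\lambda\notin\sigma$, together with the patching argument $\sigma(T)\subseteq\sigma(T|_{R_\sigma})\cup\sigma(T|_{K_\sigma})$; both are sound, and the contour bookkeeping you defer (nested contours, Fubini for continuous Banach-space-valued integrands, deformation onto $\Gamma\cup\Gamma_\tau$) is routine. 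So the proposal is a correct, self-contained substitute for the external citation rather than a genuinely different route.
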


We are now ready to prove the following proposition. While parts of the proof can be found in \cite{GGK} and \cite{TK}, we include it here for the sake of completeness. 

 \begin{proposition}
\label{basicpropsofeigenprojectionsprop}
    Let $T \in \mathcal{B}(X)$, and let $\lambda \in \sigma(T)$ be an isolated point of the spectrum. Denote $\tau := \sigma(T) \setminus \{\lambda\}$. Then the following statements about the Riesz projection $P_{\{\lambda\}}$ hold:
    \begin{enumerate}[label=(\roman*)]
    \item $P_{\{\lambda\}}$ is linear and bounded. 
    \vspace{-0.2cm}
    \item $P_{\{\lambda\}}$ is a projection, i.e., $P_{\{\lambda\}}^2 = P_{\{\lambda\}}$. 
    \vspace{-0.2cm}
    \item $P_{\{\lambda\}}$ commutes with $T$, i.e., $P_{\{\lambda\}}T = TP_{\{\lambda\}}$.
    \vspace{-0.2cm}
    \item The projections $P_{\{\lambda\}}$ and $P_{\tau}$ form a resolution of the identity and are orthogonal, i.e., 
    \[
    P_{\{\lambda\}} + P_{\tau} = I \quad \text{and} \quad P_{\{\lambda\}} P_{\tau} = P_{\tau} P_{\{\lambda\}} = 0.
    \]
    \vspace{-1cm}
    \item Assume, in addition that, $T$ is algebraic. If $\lambda$ is a semisimple eigenvalue of $T$, then $P_{\{\lambda\}}$ is the eigenprojection onto $\operatorname{Ker}(T-\lambda I)$.
  \end{enumerate}
\end{proposition}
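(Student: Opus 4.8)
The plan is to establish the five items in the order listed, since each relies on the previous. Throughout write $R(z):=(zI-T)^{-1}$ for $z\in\rho(T)$, so that the defining formula $(\ref{eigenprojeqforsigma})$ reads $P_{\{\lambda\}}=\tfrac{1}{2\pi i}\int_{\Gamma}R(z)\,dz$, and recall that $z\mapsto R(z)$ is analytic on $\rho(T)$ and satisfies the resolvent identity $R(z)R(w)=\tfrac{R(w)-R(z)}{z-w}$. For (i): the integrand is a continuous $\mathcal{B}(X)$-valued function on the compact contour $\Gamma$, so the integral exists as a norm limit of Riemann sums; $P_{\{\lambda\}}$ is then linear as an operator and $\|P_{\{\lambda\}}\|$ is bounded by $\tfrac{1}{2\pi}$ times the length of $\Gamma$ times $\sup_{z\in\Gamma}\|R(z)\|<\infty$. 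For (iii): every $R(z)$ commutes with $T$, and this property passes to the Riemann sums and to their limit, so $TP_{\{\lambda\}}=P_{\{\lambda\}}T$. For (ii) I would use the two-contour device: choose Cauchy contours $\Gamma$ and $\Gamma'$, both enclosing $\lambda$ and separating it from $\tau$, with $\Gamma$ lying in the interior of $\Gamma'$; then $P_{\{\lambda\}}^{2}=\tfrac{1}{(2\pi i)^2}\int_{\Gamma}\int_{\Gamma'}R(z)R(w)\,dw\,dz$, and inserting the resolvent identity splits this into two iterated integrals. In one of them the inner integral $\int_{\Gamma}(z-w)^{-1}\,dz$ vanishes (the point $w\in\Gamma'$ lies outside $\Gamma$), while in the other $\int_{\Gamma'}(z-w)^{-1}\,dw=-2\pi i$ (the point $z\in\Gamma$ lies inside $\Gamma'$); after Fubini, what remains is exactly $\tfrac{1}{2\pi i}\int_{\Gamma}R(z)\,dz=P_{\{\lambda\}}$.

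For (iv), I would first record the standard fact that $\tfrac{1}{2\pi i}\int_{\Gamma_0}R(z)\,dz=I$ whenever $\Gamma_0$ is a Cauchy contour enclosing all of $\sigma(T)$: taking $\Gamma_0$ a circle of radius larger than $\|T\|$ and using the norm-convergent expansion $R(z)=\sum_{k\ge 0}z^{-k-1}T^{k}$, term-by-term integration leaves only the $k=0$ term, and the general case follows by deforming $\Gamma_0$ within $\rho(T)$. Since $\{\lambda\}$ and $\tau$ are disjoint closed sets with union $\sigma(T)$, such a $\Gamma_0$ may be chosen homologous in $\rho(T)$ to the disjoint union of a contour $\Gamma_\lambda$ around $\lambda$ (the one defining $P_{\{\lambda\}}$) and a contour $\Gamma_\tau$ around $\tau$ (the one defining $P_{\tau}$); splitting the integral gives $I=P_{\{\lambda\}}+P_{\tau}$. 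The orthogonality relations then follow from (ii): $P_{\{\lambda\}}P_{\tau}=P_{\{\lambda\}}(I-P_{\{\lambda\}})=P_{\{\lambda\}}-P_{\{\lambda\}}^{2}=0$, and symmetrically $P_{\tau}P_{\{\lambda\}}=0$.

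Item (v) is the substantive one. By \autoref{theorem:inv-subspace} applied with $\sigma=\{\lambda\}$ we have $X=R_\lambda\oplus K_\lambda$, with $R_\lambda=\operatorname{Ran}(P_{\{\lambda\}})$ and $K_\lambda=\operatorname{Ker}(P_{\{\lambda\}})$ both $T$-invariant, $\sigma(T|_{R_\lambda})=\{\lambda\}$ and $\sigma(T|_{K_\lambda})=\tau$. Since $P_{\{\lambda\}}$ is the projection onto $R_\lambda$ along $K_\lambda$, it suffices to prove $R_\lambda=\operatorname{Ker}(T-\lambda I)$. For the inclusion $\subseteq$: the restriction $T|_{R_\lambda}$ is again algebraic (any annihilating polynomial for $T$ annihilates every restriction of $T$) with spectrum $\{\lambda\}$, so its minimal polynomial is $(t-\lambda)^{k}$ for some $k\ge 1$, i.e.\ $(T-\lambda I)^{k}$ vanishes on $R_\lambda$; but semisimplicity of $\lambda$ means $\operatorname{Ker}((T-\lambda I)^{2})=\operatorname{Ker}(T-\lambda I)$, which iterates to $\operatorname{Ker}((T-\lambda I)^{k})=\operatorname{Ker}(T-\lambda I)$ for every $k\ge 1$, so $R_\lambda\subseteq\operatorname{Ker}(T-\lambda I)$. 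For the inclusion $\supseteq$: if $Tx=\lambda x$, write $x=P_{\{\lambda\}}x+P_{\tau}x$ using (iv); since $P_{\tau}$ commutes with $T$ (by the argument of (iii) applied to $P_{\tau}$), $(T-\lambda I)P_{\tau}x=P_{\tau}(T-\lambda I)x=0$, so $P_{\tau}x\in K_\lambda\cap\operatorname{Ker}(T-\lambda I)$; but $(T-\lambda I)|_{K_\lambda}$ is invertible because $\lambda\notin\sigma(T|_{K_\lambda})$, forcing $P_{\tau}x=0$ and hence $x=P_{\{\lambda\}}x\in R_\lambda$. The two inclusions give $R_\lambda=\operatorname{Ker}(T-\lambda I)$, and since $K_\lambda$ is accordingly the sum of the generalized eigenspaces for the other eigenvalues, $P_{\{\lambda\}}$ is precisely the eigenprojection onto $\operatorname{Ker}(T-\lambda I)$.

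The main obstacle is the first inclusion in (v): one must turn the kernel condition defining semisimplicity into the statement that $T|_{R_\lambda}$ has no nontrivial nilpotent part, which is exactly what forces the abstract Riesz projection to coincide with the concrete eigenprojection. Items (i)--(iv) are routine manipulations with the resolvent and Cauchy's theorem.
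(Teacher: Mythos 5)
Your proposal is correct, and items (i)--(iii) follow the paper's proof essentially verbatim (linearity and boundedness from the integral over a compact contour, the two-contour argument with the resolvent identity, and commutation passing through the Riemann sums). You diverge from the paper in two places. In (iv), you establish $P_{\{\lambda\}}+P_{\tau}=P_{\sigma(T)}=I$ by expanding the resolvent as the Neumann series $R(z)=\sum_{k\ge 0}z^{-k-1}T^{k}$ on a large circle and integrating term by term; the paper instead invokes \autoref{theorem:inv-subspace} with $\sigma=\sigma(T)$, noting that $\sigma(T|_{K_{\sigma(T)}})=\emptyset$ forces $K_{\sigma(T)}=\{0\}$. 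Your route is more self-contained (it does not lean on the nonemptiness of the spectrum of a restriction), while the paper's is shorter given that it has already quoted the Gohberg--Goldberg--Kaashoek theorem. In (v), the forward inclusion is the same idea in both proofs --- the restriction $(T-\lambda I)|_{R_{\lambda}}$ is nilpotent by algebraicity and $\sigma(T|_{R_\lambda})=\{\lambda\}$, and semisimplicity kills the nilpotent part; you phrase this as the iterated kernel identity $\operatorname{Ker}((T-\lambda I)^{k})=\operatorname{Ker}(T-\lambda I)$, the paper as a minimal-counterexample argument, and both are valid. For the reverse inclusion the paper computes $P_{\{\lambda\}}y=y$ directly from $(T-zI)^{-1}y=y/(\lambda-z)$ and the Cauchy integral formula, whereas you decompose $x=P_{\{\lambda\}}x+P_{\tau}x$ and use that $T-\lambda I$ is invertible on $K_{\lambda}$ since $\lambda\notin\sigma(T|_{K_{\lambda}})$; your version avoids a second contour computation at the cost of appealing again to \autoref{theorem:inv-subspace}. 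Both sub-arguments are complete and correct, so there is no gap to repair.
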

\begin{proof}
We proceed to prove the statements as follows. 
\begin{enumerate}[label=(\roman*)]
    \item The linearity of $P_{\{\lambda\}}$ follows from the linearity of the resolvent $(T - zI)^{-1}$ and the integral operator. To show boundedness, let $\Gamma$ be a Cauchy contour (compact and positively oriented) in the resolvent set of $T$ enclosing $\lambda$. For any $x \in X$, continuity of $z \mapsto (T - zI)^{-1}$ on $\Gamma$ implies there exists $M > 0$ such that $\|(T - zI)^{-1}\| \leq M$ on $\Gamma$. Then:
\[
\|P_{\{\lambda\}}(x)\| \leq \frac{1}{2\pi}M\,\text{length}(\Gamma)\cdot \|x\|,
\]
which shows $P_{\{\lambda\}}$ is bounded.
   
\item Let $\Gamma$ and $\gamma$ be  Cauchy contours around $\lambda$ separating $\lambda$ from $\sigma(T)\setminus \{\lambda\}$. We assume that $\Gamma$ is in the interior of $\gamma$. Then:
\begin{align*}
    P_{\{\lambda\}}^2&= \left(-\frac{1}{2\pi i}\int_{\Gamma}(T-zI)^{-1}\, dz\right)\left(-\frac{1}{2\pi i}\int_{\gamma}(T-\omega I)^{-1}\, d\omega\right)\\ &= \left(\frac{1}{2\pi i}\right)^2\int_{\Gamma}\int_{\gamma} (T-zI)^{-1}(T-\omega I)^{-1}dz d\omega
\end{align*}
Using the resolvent identity (c.f. equation (5) in \cite{TK}, pg 36), we have  
\begin{align*}
    P_{\{\lambda\}}^2 &= \left(\frac{1}{2\pi i}\right)^2\int_{\Gamma}\int_{\gamma}\frac{1}{z-\omega} (T-zI)^{-1}dz d\omega - \left(\frac{1}{2\pi i}\right)^2\int_{\Gamma}\int_{\gamma}\frac{1}{z-\omega} (T-\omega I)^{-1}dz d\omega
\end{align*}
For the notational convenience we write $P_{\{\lambda\}}^2= Q_1 - Q_2$, where 
\begin{align*}
    Q_1 &= \left(\frac{1}{2\pi i}\right)^2\int_{\Gamma}\int_{\gamma}\frac{1}{z-\omega} (T-zI)^{-1}dz d\omega\\ &= \left(\frac{1}{2\pi i}\right)^2\int_{\Gamma}(T-zI)^{-1}\left( \int_{\gamma} \frac{1}{z-\omega}\, d\omega\right)\,dz\\& = \left(\frac{1}{2\pi i}\right)^2\int_{\Gamma}(T-zI)^{-1}\, dz = P_{\{\lambda\}}.
\end{align*}
On the other hand, 
\begin{align*}
    Q_2 &= \left(\frac{1}{2\pi i}\right)^2\int_{\Gamma}\int_{\gamma}\frac{1}{z-\omega} (T-\omega I)^{-1}dz d\omega \\&= \left(\frac{1}{2\pi i}\right)^2\int_{\gamma}\int_{\Gamma}\frac{1}{z-\omega} (T-\omega I)^{-1}d\omega dz\\&= \left(\frac{1}{2\pi i}\right) \int_{\gamma}(T-\omega I)^{-1}\left( \frac{1}{2\pi i}\int_{\Gamma}\frac{1}{z-\omega}\, dz\right)\, d\omega
\end{align*}
Since, $\Gamma$ is in the interior of $\gamma$, so $\displaystyle\int_{\Gamma}\frac{1}{z-\omega}\, dz=0$, and therefore, $P_{\{\lambda\}}^2=P_{\{\lambda\}}$, which proves $(ii)$.

\item To prove the commutativity, we first note that for every $z \in \rho(T)$,
\begin{align*}
    T(T-zI)^{-1}&= (T-zI+zI)(T-zI)^{-1}=I+(T-zI)^{-1}z=(T-zI)^{-1}T.
\end{align*}
Then by a simple computation we can show that $P_{\{\lambda\}}T=TP_{\{\lambda\}}$.

\item We will first prove that the projections $P_{\{\lambda\}}$ and $P_{\tau}$ form a resolution of identity. To do this, let us choose Cauchy contours $\Gamma_{\lambda}, \Gamma_{\tau}$ and $\Gamma$ (in counterclockwise direction) such that $\Gamma_{\lambda}$ separates $\{\lambda\}$ from $\tau$ and $\Gamma$ encloses both the contours.
Then
\[
P_{\{\lambda\}} + P_{\tau} = -\frac{1}{2\pi i} \int_{\Gamma} (T - zI)^{-1} \, dz = P_{\sigma(T)}.
\]
Applying \autoref{theorem:inv-subspace} with $\sigma=\sigma(T)$, we see that $\sigma(T|_{K{_{\sigma(T)}}})$ is the empty set. Which implies $T|_{K_{\sigma(T)}}\equiv0$. Hence, $K_{\sigma(T)}=\{0\}$ and $X=\operatorname{Ran}(P_{\sigma(T)})$ and so $P_{\sigma(T)}=I$. Which proves $P_{\{\lambda\}} + P_{\tau} = I$. The orthogonality part follows from: 
\[P_{\{\lambda\}}P_{\tau}= P_{\{\lambda\}}(I-P_{\{\lambda\}})=0= P_{\tau}P_{\{\lambda\}}.\]
\item We now show that the subspace $R_{\{\lambda\}} = \operatorname{Ran}(P_{{\{\lambda\}}})$, as defined in \autoref{theorem:inv-subspace}, coincides with the subspace $\operatorname{Ker}(T-\lambda I)$, i.e., $R_{\{\lambda\}} = \operatorname{Ker}(T-\lambda I)$. We will first prove that $R_{\{\lambda\}}\subseteq \operatorname{Ker}(T-\lambda I)$. Since $\lambda$ is semisimple, we also have on $R_{\{\lambda\}}$, $\operatorname{Ker}((T-\lambda I)^2)=\operatorname{Ker}(T-\lambda I)$. To simplify the notations, let us write $S:= (T-\lambda I)\big|_{R_{\{\lambda\}}}$. By \autoref{theorem:inv-subspace}, we get $R_{\{\lambda\}}$ is a $T$-invariant subspace and $\sigma\left(T\big|_{R_{\{\lambda\}}}\right)=\{\lambda\}$, which implies $\sigma(S)=\{0\}$. Since $T$ is algebraic, the restriction $S$ is also algebraic. Let $m_{S}(z)$ be the minimal polynomial for $S$. Since $\sigma(S)=\{0\}$, so $m_{S}(z)=z^{m}$ for some integer $m \ge 1$. Thus there exists an integer $m \ge 1$ such that $S^{m}=0$. We will now prove that $S=0$. Suppose, in contrary, $S \ne 0$. Let $r$ be the smallest integer such that $S^{r}=0$; hence $r \ge 2$. Thus there exists $x_0 \in R_{\{\lambda\}}$ such that $S^{r-1}x_0 \ne 0$. Set $y:= S^{r-2}x_0$. Then
\[S^2y=S^{r}x_0=0, \quad \text{and}\;\;\; Sy=S^{r-1}x_0 \ne 0.\] Thus, $y \in \operatorname{Ker}(S^2)$ but $y \notin \operatorname{Ker}(S)$, which is a contradiction to the semisimplicity of $\lambda$. Therefore, $S=0$, which means 
\[T\big|_{R_{\{\lambda\}}}=\lambda I\big|_{R_{\{\lambda\}}}.\] Thus, every vector $x \in R_{\{\lambda\}}$ also belongs to the subspace $\operatorname{Ker}(T-\lambda I)$ and hence $R_{\{\lambda\}}\subseteq \operatorname{Ker}(T-\lambda I)$.

To prove the reverse inclusion, let $y \in \operatorname{Ker}(T-\lambda I)$, so that $Ty = \lambda y$. For any $z \in \rho(T)$ and any $x \in X$, we have
$(T - zI)^{-1}(y) = \dfrac{y}{\lambda - z}.$ Therefore,
\begin{align*}
P_{{\{\lambda\}}} y &= -\frac{1}{2\pi i} \int_{\Gamma} (T - zI)^{-1} (y) \, dz
= \frac{y}{2\pi i} \int_{\Gamma} \frac{1}{z - \lambda} \, dz = y,
\end{align*}
where $\Gamma$ is a positively oriented Cauchy contour enclosing $\lambda$ and avoiding $\sigma(T) \setminus {\{\lambda\}}$. Hence, $y \in \operatorname{Ran}(P_{{\{\lambda\}}}) = R_{\{\lambda\}}$, and we conclude that $\operatorname{Ker}(T-\lambda I) \subseteq R_{\{\lambda\}}$, completing the proof.

\end{enumerate}
\vspace{-0.3in}
\end{proof}
\begin{remark}\label{rem:npotenteigenprojection}
  Let $T$ be an $n$-potent operator on a Banach space $X$. Then for each $\lambda \in \sigma(T)$, the Riesz projection $P_{\{\lambda\}}$ coincides with the eigenprojection onto $\operatorname{Ker}(T-\lambda I)$.
\end{remark}

\begin{remark}
    Although the Riesz projections $P_{\{\lambda\}}$ is bounded, however, $\|P_{\{\lambda\}}\|$ can be made arbitrarily large. For $n \in \mathbb{N}$, we consider the following tri-potent operator $T_n$ on $X=\mathbb{C}^2$ with the $\ell^{\infty}$ norm given by the matrix:
    \[T_n=\begin{pmatrix}
        1 & -2n\\
        0 & -1
    \end{pmatrix}.\]
    One can verify that $T_n$ is tri-potent and $\sigma(T_n)=\{1,-1\}$. We compute the Riesz projection $P_{\{1\}}$ associated to the eigenvalue $1$:
    \[P_{\{1\}}=\begin{pmatrix}
        1 & -n\\
        0 & 0
    \end{pmatrix}.\] We see that $\|P_{\{1\}}\| \to \infty$ as $n \to \infty$ and hence the norm grows arbitrarily large.  
    
\end{remark}

\section{Spectral decomposition of $n$-potent operators and projections in $comb(T)$}\label{sec2:spec-decomposition} 
In this section we develop a spectral decomposition for $n$-potent operators and use it to analyze projections in the algebra $\operatorname{comb}(T)$. We use that the spectrum of $n$-potent operators is finite and is contained in $\mu_{n-1}\cup \{0\}$ and use it to derive explicit formulas for the Riesz projections associated with the nonzero eigenvalue of $T$. We also obtain the necessary and sufficient conditions for $T$ to be periodic.
\medskip

We start with a spectral decomposition of an $n$-potent operator $T$ on a Banach Space $X$. In the following theorem, we establish that an operator $T$ is $n$-potent if and only if it admits a spectral decomposition in terms of its Riesz projections. While this result resembles Corollary 3.3 of \cite{OJ} and Theorem 5.9-D-F in \cite{t}, it applies to a different class of operators. We will first recall a finite variant of resolution of identity. 

\begin{definition}\label{def:roi}
A finite family of projections $\{P_j\}_{j=1}^k \subseteq \mathcal{B}(X)$ is called a finite resolution of identity on $X$ if:
\begin{enumerate}
    \item $P_iP_j=0=P_jP_i$ for $i \ne j$ and $i, j \in \{1,2, \cdots, k\}$. 
    \item $I=\displaystyle\sum_{j=1}^{k}P_j$.
\end{enumerate}  
\end{definition}
The condition (1) is also referred to as the family $\{P_j\}_{j=1}^{k}$ being pairwise disjoint family. 
\smallskip

\noindent We are now ready to state the Theorem:
\begin{theorem}\label{resultonnpotentdecomposition}
Let $T$ be a bounded linear operator on a Banach space $X$. Then $T$ is an $n$-potent operator if and only if there exist scalars $\lambda_1, \dots, \lambda_k \in \{0, 1, \omega, \ldots, \omega^{n-2}\}$, where $\omega$ is a primitive $(n-1)^{\text{th}}$ root of unity, and unique projections $P_1, \dots, P_k$ $\in $ $\mathcal{B}(X)$ such that:
\begin{enumerate}[label=(\roman*)]
    \item Each non-zero $\lambda_j$ is distinct and satisfies $\lambda_j^n = \lambda_j$;
    \item $T = \displaystyle\sum_{j=1}^{k} \lambda_j P_j$;
    \item The family $\{P_j\}_{j=1}^k$ forms a finite resolution of identity.
\end{enumerate}

In particular, if one of the $ \lambda_j $'s is equal to $ 0 $, then the corresponding projection $ P_0 $ projects onto the null space $ \operatorname{Ker}(T) $.
\end{theorem}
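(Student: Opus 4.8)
The plan is to prove the two implications separately. The direction ``such a decomposition exists $\Rightarrow$ $T$ is $n$-potent'' is immediate: given $T=\sum_{j=1}^{k}\lambda_jP_j$ with the stated properties, pairwise disjointness of $\{P_j\}$ together with $P_j^2=P_j$ forces $T^r=\sum_{j=1}^{k}\lambda_j^{\,r}P_j$ for every integer $r\ge 1$ (all mixed products vanish), and since $\lambda_j^{\,n}=\lambda_j$ for every $j$ by (i) --- this also holds when $\lambda_j=0$ --- we get $T^n=\sum_j\lambda_j^{\,n}P_j=\sum_j\lambda_jP_j=T$. For the converse I would exhibit the projections explicitly as Lagrange interpolation polynomials in $T$ on the finite spectrum, read off (i)--(iii) from polynomial congruences modulo the minimal polynomial of $T$, and obtain uniqueness by showing any such decomposition is recovered by the same interpolation.

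For the forward direction I would argue as follows. Assume $T^n=T$. By \autoref{SpecOfnPotentOpProp} the spectrum is a finite set $\sigma(T)=\{\lambda_1,\dots,\lambda_k\}\subseteq\mu_{n-1}\cup\{0\}$, so each non-zero $\lambda_j$ is an $(n-1)$st root of unity and therefore $\lambda_j^{\,n}=\lambda_j$; this is precisely (i). The minimal polynomial $m(t)$ of $T$ divides the annihilating polynomial $t^n-t$, which has simple roots, so $m$ has simple roots; and its root set is exactly $\sigma(T)$ (each $\lambda\in\sigma(T)=\sigma_p(T)$ annihilates an eigenvector, so $m(\lambda)=0$, while a root $\mu\notin\sigma(T)$ would make $m(t)/(t-\mu)$ annihilate $T$, contradicting minimality), whence $m(t)=\prod_{j=1}^{k}(t-\lambda_j)$. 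Let $L_1,\dots,L_k\in\mathbb{C}[t]$ be the Lagrange interpolants for the nodes $\lambda_1,\dots,\lambda_k$ (so $\deg L_i\le k-1$ and $L_i(\lambda_j)=\delta_{ij}$) and put $P_j:=L_j(T)$. The identities $P_iP_j=\delta_{ij}P_j$, $\ \sum_{j}P_j=I$ and $T=\sum_{j}\lambda_jP_j$ then follow by evaluating at $T$ the congruences $L_iL_j\equiv\delta_{ij}L_i$, $\ \sum_j L_j\equiv 1$ and $\ \sum_j\lambda_jL_j\equiv t\pmod{m(t)}$: in each case the difference of the two sides vanishes at all $k$ nodes, hence ($m$ having simple roots) is divisible by $m$ --- and for the last two it already has degree $<k$, hence is the zero polynomial outright --- and $m(T)=0$. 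This gives (ii) and (iii), and each $P_j$ is a projection.

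For the last assertion, suppose $\lambda_{j_0}=0$. Then $t\,L_{j_0}(t)$ vanishes at all $k$ nodes and has degree at most $k$, so $m\mid t\,L_{j_0}(t)$ and hence $TP_{j_0}=0$, i.e.\ $\operatorname{Ran}(P_{j_0})\subseteq\operatorname{Ker}(T)$; conversely $Tx=0$ gives $q(T)x=q(0)x$ for every $q\in\mathbb{C}[t]$, so $P_{j_0}x=L_{j_0}(0)x=x$, and thus $\operatorname{Ran}(P_{j_0})=\operatorname{Ker}(T)$. (Equivalently, by \autoref{basicpropsofeigenprojectionsprop}(v) and \autoref{rem:npotenteigenprojection} each $P_j$ is the Riesz projection $P_{\{\lambda_j\}}$, the eigenprojection onto $\operatorname{Ker}(T-\lambda_jI)$.) For uniqueness, let $T=\sum_{l=1}^{m}\mu_lQ_l$ be any decomposition as in the statement with distinct scalars and each $Q_l\ne 0$. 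Disjointness gives $TQ_l=\mu_lQ_l$, so every $\mu_l$ is an eigenvalue of $T$; moreover $\prod_l(t-\mu_l)$ annihilates $T$ (it kills every $\operatorname{Ran}(Q_l)$, and these span $X$ because $\sum_l Q_l=I$), so $m\mid\prod_l(t-\mu_l)$ and therefore $\sigma(T)\subseteq\{\mu_l\}$. Hence $\{\mu_l\}=\sigma(T)=\{\lambda_j\}$; after reindexing $m=k$ and $\mu_j=\lambda_j$. Finally, disjointness together with $\sum_j Q_j=I$ yields $q(T)=\sum_j q(\lambda_j)Q_j$ for every polynomial $q$ (the constant term being absorbed by $\sum_j Q_j=I$), and $q=L_j$ gives $Q_j=L_j(T)=P_j$.

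I do not expect a serious obstacle: the substance is the bookkeeping of congruences modulo $m(t)$, all made available by the simple-roots property inherited from $t^n-t$. The one place needing care is uniqueness, which must be read with all the projections nonzero (otherwise one could append a pair $(\mu,0)$ to any decomposition); the only step there that is not a pure manipulation of polynomial identities is the argument that the set of scalars is forced to coincide with $\sigma(T)$, so that no spurious $\mu_l$ occurs.
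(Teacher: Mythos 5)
Your proof is correct, but it takes a genuinely different route from the paper's. The paper constructs the projections as Riesz projections via contour integrals of the resolvent and then leans on \autoref{theorem:inv-subspace} and Proposition~\ref{basicpropsofeigenprojectionsprop} to obtain the resolution of identity, the pairwise disjointness (via triviality of intersections of distinct eigenspaces), and the identification of each range with an eigenspace; its uniqueness argument compares ranges and kernels of the two families directly. You instead build the projections algebraically as $P_j=L_j(T)$ for the Lagrange interpolants at the nodes $\sigma(T)$, and read off (i)--(iii) from polynomial congruences modulo the minimal polynomial, whose simple-roots property is inherited from $t^n-t$. This buys several things: no complex analysis is needed; the projections visibly lie in the unital algebra generated by $T$ (anticipating \autoref{thm:spectrumandeigenprojection}, whose explicit formula is exactly your $L_j(T)$ when $\sigma(T)$ is the full set of $(n-1)^{\text{th}}$ roots of unity); and your uniqueness argument is tighter --- you show that any competing decomposition with nonzero, pairwise disjoint projections is forced to have scalar set equal to $\sigma(T)$, and then recover $Q_j=L_j(T)$ from the identity $q(T)=\sum_l q(\mu_l)Q_l$, whereas the paper tacitly assumes the competing family uses the same scalars and asserts $\operatorname{Ran}(R_j)=\operatorname{Ker}(T-\lambda_jI)$ ``by construction.'' Your caveat that uniqueness must be read with all projections nonzero (else one appends pairs $(\mu,0)$) is a genuine point the paper glosses over. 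The only cosmetic slip is the parenthetical claim that $\sum_j\lambda_jL_j-t$ has degree $<k$: for $k=1$ it has degree $1$, but your divisibility-by-$m$ argument already covers that case, so nothing is lost.
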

\begin{proof}
Suppose there exist scalars $\lambda_1, \dots, \lambda_k \in \{0, 1,\omega,\cdots, \omega^{n-2}\}$ and projections $\{P_j\}_{j=1}^{k}$ satisfying properties $(i)$, and $(iii)$ such that
$T = \sum_{j=1}^{k} \lambda_j P_j.$
Without loss of generality, assume that $\lambda_1 = 0$. Then using the disjointness of $P_j$, we have:
\[
T^n = \sum_{j=1}^{k} \lambda_j^n P_j.
\]
Since each $\lambda_j$ satisfies $\lambda_j^n = \lambda_j$ (by condition (i)), we conclude:
\[
T^n = \sum_{j=1}^k \lambda_j^n P_j = \sum_{j=1}^k \lambda_j P_j = T,
\]
which shows that $T$ is $n$-potent.

To prove the converse, we assume that $T$ is an $n$-potent operator. Then by Proposition~\ref{SpecOfnPotentOpProp}, all eigenvalues of $T$ lie in $\{0, 1, \omega, \ldots, \omega^{n-2}\}$, and hence, each non zero eigenvalue $\lambda$ is distinct and satisfies $\lambda^n = \lambda$. Let $\{\lambda_1, \dots, \lambda_k\}$ denote the distinct eigenvalues of $T$, and for each $\lambda_j$, let $Q_j$ denote the corresponding Riesz projection. Using a similar technique used in Proposition ~\ref{basicpropsofeigenprojectionsprop}(v), we can show that the sum of the projections $\{Q_j\}_{j=1}^{k}$ is $I$. 

To show $Q_iQ_j=0$ for every pair $(i,j)$ with $i \neq j$, we first note that $\operatorname{Ran}(Q_i)$ and $\operatorname{Ran}(Q_j)$ are the eigenspaces corresponding to $\lambda_i$ and $\lambda_j$ respectively. Let us denote them by $E_{\lambda_i}$ and $E_{\lambda_j}$ respectively from Remark \ref{rem:npotenteigenprojection}. Since, $E_{\lambda_i} \cap E_{\lambda_j}=\{0\}$ and $Q_iQ_j=Q_jQ_i$ (by Proposition \ref{basicpropsofeigenprojectionsprop}(iii)), then for every $x \in X$, 
\[z:=Q_iQ_j(x)=Q_jQ_i(x)\] which implies $z \in \operatorname{Ran}(Q_i) \cap \operatorname{Ran}(Q_j)$ and hence $z=0$, satisfying property $(iii)$.

 To finish the proof, we will first show that $T=\sum_{j=1}^{k}\lambda_jQ_j$ where $\{Q_j\}_{j=1}^{k}$ are the Riesz projections corresponding to the eigenvalue $\{\lambda_j\}_{j=1}^{k}$. This can be shown as follows: Let $x \in X$. Since $\sum_{j=1}^{k}Q_j=I$, and $Q_j$ is an eigenprojection for all $1 \le j \le k$, then
 \[T(x)= \sum_{j=1}^{k}T(Q_{j}(x))= \sum_{j=1}^{k} \lambda_jQ_j(x).\] Since $x \in X$ is arbitrary, therefore, $T=\sum_{j=1}^{k}\lambda_jQ_j.$ 
 We will now prove the uniqueness of $Q_j$. Suppose that there exist projections $\{R_j\}_{j=1}^{k}$ and scalars $\lambda_1, \lambda_2, \dots, \lambda_k$ such that $T=\sum_{j=1}^{k}\lambda_jR_j$. We will show that for every $1 \leq j \leq k$, $Q_j=R_j$. It is enough to show that for every $1 \leq j \leq k$, $\operatorname{Ran}(R_j)=\operatorname{Ran}(Q_j)$ and $Ker(R_j)=Ker(Q_j)$. By the construction of $R_j$ and $Q_j$, it is clear that $\operatorname{Ran}(R_j)=\operatorname{Ker}(T-\lambda_jI)=\operatorname{Ran}(Q_j)$ for all $j$. To show $Ker(R_j)=Ker(Q_j)$ for every $1 \leq j \leq k$, we note that:
 $Ker(R_j)=\bigoplus_{\scriptstyle i = 1,\; i \ne j}^{k}\operatorname{Ran}(R_i)$. This is because of the following. Let $x \in Ker(R_j)$, which implies $R_j(x)=0$. Since, $\sum_{j=1}^{k}R_j=I$, hence,
\[x=\sum_{i=1, i \ne j}^{k}R_i(x) \in \bigoplus_{i=1, i\neq j}^{k}\operatorname{Ran}(R_i),\]
here the containment is direct sum since the decomposition is unique because of $(iii)$. To prove the converse, let $x \in \bigoplus_{i=1, i \ne j}^{k}\operatorname{Ran}(R_i)$. Since the projections $R_j$ satisfy $(iii)$, hence,
\[R_jx = \sum_{i=1, i \ne j}^{k}R_jR_i(x)=\sum_{i=1,i \ne j}^{k}R_i(R_j(x))=0,\] which implies $x \in Ker(R_j)$. Therefore, $Ker(R_j)=\bigoplus_{\scriptstyle i = 1,\; i \ne j}^{k}\operatorname{Ran}(R_i)$. Similarly, replacing $R_j$ by $Q_j$ we get $Ker(Q_j)=\bigoplus_{\scriptstyle i = 1,\; i \ne j}^{k}\operatorname{Ran}(Q_i)$. Therefore, by combining all the observations, we have the following:
\[Ker(R_j)= \bigoplus_{i=1, i\neq j}^{k}\operatorname{Ran}(R_i)= \bigoplus_{i=1, i\neq j}^{k}\operatorname{Ran}(Q_i)= Ker(Q_j).\] Hence, for all $1 \leq j \leq k$, $R_j=Q_j$ and that proves the Theorem. 

Assume $T$ is an $n$-potent operator and in particular, one of the $\lambda_j$'s is zero. Also let $P_0$ denote the projection corresponding to the eigenvalue $0$. We claim that $\operatorname{Ran}(P_0) = \operatorname{Ker}(T)$. 

First, let $x \in \operatorname{Ran}(P_0)$. Then $x = P_0 y$ for some $y \in X$, and since $T = \sum_j \lambda_j P_j$, it follows that
\[
T x = T(P_0 y) = \sum_{j=1}^{k} \lambda_j P_j(P_0 y) = \lambda_0 P_0 y = 0,
\]
as $\lambda_0 = 0$ and $P_j P_0 = 0$ for $j \ne 0$. Hence $x \in \operatorname{Ker}(T)$, and we conclude that $\operatorname{Ran}(P_0) \subseteq \operatorname{Ker}(T)$.

Conversely, let $x \in \operatorname{Ker}(T)$. Then
\[
0 = T x = \sum_{j=1}^{k} \lambda_j P_j x,
\]
which implies $\sum_{j \colon \lambda_j \neq 0} \lambda_j P_j x = 0$. Since the projections $\{P_j\}$ satisfy $P_i P_j = 0$ for $i \ne j$, it follows that $P_j x = 0$ for all $j$ with $\lambda_j \ne 0$. Hence,
\[
x = \sum_{j=1}^{k} P_j x = P_0 x,
\]
so $x \in \operatorname{Ran}(P_0)$. Thus, $\operatorname{Ker}(T) \subseteq \operatorname{Ran}(P_0)$. Therefore, we obtain $\operatorname{Ran}(P_0) = \operatorname{Ker}(T)$, as desired.
\end{proof}

\begin{remark}
Let $T$ be an $n$-potent operator on a Banach Space $X$ and define $Y:=\operatorname{Ran}(T^{n-1})$, the subspace corresponding to the nonzero eigenvalues of $T$. From \autoref{resultonnpotentdecomposition} we get that corresponding to any non-zero value $\{\lambda_1, \lambda_2,\cdots, \lambda_r\}$ with $1 \le r\le n-1$ in the spectrum of $T$, there exist Riesz projections $\{P_{\lambda_1}, P_{\lambda_2}, \cdots, P_{\lambda_r}\}$ such that:
    \[\{P_{\lambda_j}\}_{j=1}^{r}\;\; \text{forms a resolution of identity, and}\; Y=\bigoplus_{j=1}^{r}\operatorname{Ran}(P_{\lambda_j}),\] and $T$ has the following decomposition on $Y$: 
    \[T=\lambda_1P_{\lambda_1}+\lambda_2P_{\lambda_2}+\cdots+\lambda_{r}P_{\lambda_{r}}.\]
This decomposition coincides with the decomposition that appears in the definition of generalized circular projections (see, e.g., Definition 1.1 in \cite{ILP}). The only condition we need here is that the operator $T|_{Y}$ is an isometry for the given norm. Although, in general, an $n$-potent operator need not be an isometry, we may equip $Y$ with an equivalent norm such as:
    \[\|y\|_{Y}:= \max_{0 \le k \le n-2}\|T^ky\|,\]
    
to make $T$ an isometry on $Y$. Thus with respect to the renorming, the family $\{P_{\lambda_1},P_{\lambda_2}, \cdots, P_{\lambda_r}\}$ in the decomposition of $T$ forms a family of generalized circular projections. 
    
\end{remark}

The following Theorem shows a correspondence between each non-zero element of the spectrum an $n$-potent operator and its associated Riesz projection. In particular, we can compute the eigen (Riesz) projection corresponding to each non-zero $\lambda \in \sigma(T).$ We will start with a lemma which will be useful for the rest of the paper. 
\begin{lemma}\label{computation-of-inverse}
    Let $n \ge 2$ be an integer, and let $T \in \mathcal{B}(X)$ be an $n$-potent operator. Then for every $z \in \rho(T)$, we have the following expansion:
    \[(zI-T)^{-1}= \left[\frac{I}{z} + \bigg(\frac{1}{z(z^{n-1}-1)}\bigg)T^{n-1}+\sum_{k=1}^{n-2} \left( \frac{z^{n-k-2}}{z^{n-1} - 1} \right) T^k\right].\]
\end{lemma}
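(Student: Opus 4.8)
The plan is to verify the identity by a direct multiplication, working with the explicit polynomial on the right rather than with any integral or series representation. Write $R(z)$ for the right-hand side. It is well-defined exactly when the scalar denominators $z$ and $z^{n-1}-1$ are nonzero, i.e. when $z\notin\mu_{n-1}\cup\{0\}$; since \autoref{SpecOfnPotentOpProp} gives $\sigma(T)\subseteq\mu_{n-1}\cup\{0\}$, every such $z$ lies in $\rho(T)$, so there is no conflict with the hypothesis. Because $R(z)$ is a polynomial in $T$, it commutes with $zI-T$, so it suffices to establish the single identity $(zI-T)R(z)=I$; the conclusion $R(z)=(zI-T)^{-1}$ then follows.

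Before doing the computation it is worth recording where the formula comes from: for $|z|$ exceeding the spectral radius of $T$ the Neumann series gives $(zI-T)^{-1}=z^{-1}\sum_{m\ge 0}z^{-m}T^{m}$, and since $T^{m}$ runs periodically through $T,T^{2},\dots,T^{n-1}$ as $m$ increases (using $T^{n}=T$), one may group the terms with $m\ge1$ by the residue of $m$ modulo $n-1$ and sum the resulting geometric series $\sum_{\ell\ge0}z^{-\ell(n-1)}=\tfrac{z^{n-1}}{z^{n-1}-1}$; this collapses the series precisely to $R(z)$. This derivation also points to the cleanest rigorous route --- just multiply out.

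For the verification, expand $(zI-T)R(z)=zR(z)-TR(z)$. In $zR(z)$ the leading term is $I$, while the $T^{k}$-terms for $1\le k\le n-2$ become $\tfrac{z^{n-k-1}}{z^{n-1}-1}T^{k}$ and the $T^{n-1}$-term becomes $\tfrac{1}{z^{n-1}-1}T^{n-1}$, which is exactly the $k=n-1$ instance of the same expression; hence $zR(z)=I+\sum_{k=1}^{n-1}\tfrac{z^{n-k-1}}{z^{n-1}-1}T^{k}$. In $TR(z)$, the term $T\cdot\tfrac{1}{z(z^{n-1}-1)}T^{n-1}=\tfrac{1}{z(z^{n-1}-1)}T^{n}$ reduces to $\tfrac{1}{z(z^{n-1}-1)}T$ by $T^{n}=T$; adding it to $\tfrac{1}{z}T$ (from $T\cdot\tfrac1zI$) yields $\tfrac{z^{n-2}}{z^{n-1}-1}T$, and reindexing $\sum_{k=1}^{n-2}\tfrac{z^{n-k-2}}{z^{n-1}-1}T^{k+1}=\sum_{j=2}^{n-1}\tfrac{z^{n-j-1}}{z^{n-1}-1}T^{j}$ shows $TR(z)=\sum_{k=1}^{n-1}\tfrac{z^{n-k-1}}{z^{n-1}-1}T^{k}$. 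Subtracting, the two sums cancel and $(zI-T)R(z)=I$, as desired.

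The argument uses no analytic input and presents no real obstacle; the only place demanding a little care is the bookkeeping for the coefficient of $T$, where the relation $T^{n}=T$ feeds the highest-degree term of $R(z)$ back into the lowest one, together with checking that the endpoint indices $k=1$ and $k=n-1$ are produced correctly by the two reindexings. Everything else is routine polynomial arithmetic.
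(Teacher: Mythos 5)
Your proof is correct and takes essentially the same route as the paper: define $R(z)$ as the right-hand side, multiply out $(zI-T)R(z)$, use $T^{n}=T$ to fold the top-degree term back into the coefficient of $T$, and watch everything cancel to $I$; the only differences are cosmetic (you organize the cancellation as $zR(z)-TR(z)$ where the paper telescopes term by term, and you justify two-sidedness by commutativity where the paper invokes invertibility of $zI-T$). Your side remark about where $R(z)$ is actually defined is a fair point about the lemma's statement --- the formula only makes sense for $z\notin\mu_{n-1}\cup\{0\}$, which can be a proper subset of $\rho(T)$ --- but that is an issue with the statement shared by the paper, not a gap in your verification.
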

\begin{proof}
    Define:  
    \[R(z):=\frac{I}{z} + \bigg(\frac{1}{z(z^{n-1}-1)}\bigg)T^{n-1}+\sum_{k=1}^{n-2} \left( \frac{z^{n-k-2}}{z^{n-1} - 1} \right) T^k,\] and it is enough to prove that $(zI-T)R(z)=I$ for all $z \in \rho(T)$. To show this we expand and use the $n$-potency of $T$: 
\begin{align*}
    (zI-T)R(z)&= (zI-T)\bigg[\frac{I}{z} + \bigg(\frac{1}{z(z^{n-1}-1)}\bigg)T^{n-1}+\sum_{k=1}^{n-2} \left( \frac{z^{n-k-2}}{z^{n-1} - 1} \right) T^k\bigg]\\
&= \bigg(I - \frac{T}{z}\bigg)
   + \bigg(\frac{T^{n-1}}{z^{n-1}-1} - \frac{T}{z(z^{n-1}-1)}\bigg)
   + \sum_{k=1}^{n-2} \frac{z^{n-k-1}T^{k} - z^{n-k-2}T^{k+1}}{z^{n-1}-1}. \tag{1}
\end{align*}
We simplify all the denominators, evaluate the telescopic sum and observe that: 
\begin{align*}
    \sum_{k=1}^{n-2} \frac{z^{n-k}T^{k} - z^{n-k-1}T^{k+1}}{z(z^{n-1}-1)}&=\frac{\left(z^{n-1}T-z^{n-2}T^2\right)+\left(z^{n-2}T^2-z^{n-3}T^3\right)+\cdots+\left(z^{2}T^{n-2}-zT^{n-1}\right)}{z(z^{n-1}-1)}\\
    &=\frac{z^{n-1}T-zT^{n-1}}{{z(z^{n-1}-1)}}.
\end{align*}
We simplify the expression in (1): 
\[(zI-T)R(z)= \frac{(z^n-z)I}{z(z^{n-1}-1)}=I.\]
Since for $z \in \rho(T)$, $(zI-T)$ in invertible, hence the lemma is proved.
\end{proof}
We are now ready to state and prove the Theorem.
\begin{theorem}\label{thm:spectrumandeigenprojection}
    Let $n\geq 2$ be a positive integer, and $T$ be an $n$-potent operator on a Banach Space $X$. Let $\omega= e^{\frac{2\pi i}{n-1}}$ denote the primitive $(n-1)$-th root of unity. Then, for each $j=0,1, \cdots, n-2$, the following are equivalent: 
    \begin{enumerate}[label=(\roman*)]
        \item $\omega^{j} \in \sigma(T),$
        \item The operator \[P_{\{\omega^{j}\}}= \frac{1}{n-1}\sum_{k=1}^{n-1}\omega^{-jk}T^k\] is a non-zero projection that satisfies 
        \[TP_{\{\omega^{j}\}}=P_{\{\omega^{j}\}}T, \qquad TP_{\{\omega^{j}\}}=\omega^{j}P_{\{\omega^{j}\}}.\]
    \end{enumerate}
    In particular, the projections $P_{\{\omega^{j}\}}$ coincides with the Riesz projection of $T$ associated with the eigenvalue $\omega^{j}$.
\end{theorem}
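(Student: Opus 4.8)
The plan is to first observe that the displayed operator
\[
P_j := \frac{1}{n-1}\sum_{k=1}^{n-1}\omega^{-jk}T^k
\]
is \emph{always} an idempotent that commutes with $T$ and on which $T$ acts as the scalar $\omega^j$, independently of the spectrum; then the only genuinely spectral content of the theorem is the equivalence $P_j\neq 0\iff\omega^j\in\sigma(T)$. Since $P_j$ is a polynomial in $T$, it commutes with $T$ trivially. To get $TP_j=\omega^j P_j$, multiply through by $T$: the term $k=n-1$ contributes $T^n=T$, and since $\omega^{-j(n-1)}=(\omega^{n-1})^{-j}=1$, reindexing the remaining terms ($m=k+1$) collapses the sum back to $\omega^j P_j$. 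Iterating gives $T^kP_j=\omega^{jk}P_j$ for all $k\ge1$, so
\[
P_j^2=\frac{1}{n-1}\sum_{k=1}^{n-1}\omega^{-jk}\,T^kP_j=\frac{1}{n-1}\sum_{k=1}^{n-1}\omega^{-jk}\omega^{jk}P_j=P_j .
\]
Thus for every $j\in\{0,\dots,n-2\}$, $P_j$ is a projection commuting with $T$ satisfying $TP_j=\omega^j P_j$.

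Next I would dispatch the equivalence. For (ii)$\Rightarrow$(i): if $P_j\neq0$, choose $x$ with $y:=P_j x\neq0$; then $Ty=TP_j x=\omega^j P_j x=\omega^j y$, so $\omega^j$ is an eigenvalue of $T$ and hence $\omega^j\in\sigma(T)$. For (i)$\Rightarrow$(ii): by \autoref{SpecOfnPotentOpProp} we have $\sigma(T)=\sigma_p(T)$, so $\omega^j\in\sigma(T)$ yields a nonzero $v$ with $Tv=\omega^j v$, whence $T^kv=\omega^{jk}v$ and $P_j v=\frac{1}{n-1}\sum_{k=1}^{n-1}\omega^{-jk}\omega^{jk}v=v\neq0$; combined with the previous paragraph, $P_j$ is a nonzero projection with all the asserted properties.

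For the ``in particular'' clause I would identify $P_j$ with the Riesz projection $-\frac{1}{2\pi i}\int_\Gamma(T-zI)^{-1}\,dz$. Because $\omega^j\neq0$ and $\sigma(T)$ is finite, one may take $\Gamma$ to be a small positively oriented circle around $\omega^j$ that encloses no other point of $\sigma(T)$ and does not encircle $0$; the integral does not depend on this choice. Substituting the resolvent expansion of \autoref{computation-of-inverse} for $(zI-T)^{-1}$ and integrating term by term, the $I/z$ summand contributes $0$, each summand $\frac{z^{n-k-2}}{z^{n-1}-1}T^k$ with $1\le k\le n-2$ contributes its residue at the simple pole $z=\omega^j$, namely $\frac{(\omega^j)^{n-k-2}}{(n-1)(\omega^j)^{n-2}}T^k=\frac{\omega^{-jk}}{n-1}T^k$, and the $\frac{1}{z(z^{n-1}-1)}T^{n-1}$ summand contributes $\frac{1}{(n-1)(\omega^j)^{n-1}}T^{n-1}=\frac{\omega^{-j(n-1)}}{n-1}T^{n-1}$ (using $(\omega^j)^{n-1}=1$). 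Summing these recovers exactly $\frac{1}{n-1}\sum_{k=1}^{n-1}\omega^{-jk}T^k=P_j$. Alternatively one could avoid residues: $TP_j=\omega^j P_j$ forces $\operatorname{Ran}(P_j)\subseteq\operatorname{Ker}(T-\omega^j I)$, the reverse inclusion follows since $Tv=\omega^j v$ implies $P_jv=v$, and then the uniqueness in \autoref{resultonnpotentdecomposition} together with \autoref{rem:npotenteigenprojection} identifies $P_j$ with the eigen/Riesz projection at $\omega^j$.

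I expect no conceptual obstacle here; the only real work is bookkeeping — the reindexing in the computation of $TP_j$, and the residue evaluations at the simple poles $z=\omega^j$, which are routine given $\frac{d}{dz}(z^{n-1}-1)\big|_{z=\omega^j}=(n-1)\omega^{j(n-2)}$ and $\omega^{j(n-1)}=1$. The one point to state carefully is the admissibility of the contour $\Gamma$ (possible since $\sigma(T)$ is finite and $\omega^j\neq0$) and the independence of the Riesz integral from $\Gamma$, both of which were already recorded in Section~\ref{sec1:prelim}.
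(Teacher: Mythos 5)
Your proposal is correct, and it reorganizes the argument in a way that is genuinely different from the paper's. The paper proves (i)$\Rightarrow$(ii) by \emph{defining} $P_{\{\omega^{j}\}}$ as the contour integral of \autoref{DefOfEigenProj}, substituting the resolvent expansion of \autoref{computation-of-inverse}, and evaluating the residues at $z=\omega^{j}$ to arrive at the formula $\tfrac{1}{n-1}\sum_{k=1}^{n-1}\omega^{-jk}T^{k}$; idempotence and nonvanishing are then inherited from the general theory of Riesz projections (\autoref{basicpropsofeigenprojectionsprop}). You instead start from the formula and verify by pure algebra (the reindexing using $T^{n}=T$ and $\omega^{-j(n-1)}=1$) that it is \emph{always} an idempotent commuting with $T$ on which $T$ acts as $\omega^{j}$, so the equivalence reduces to the purely spectral statement $P_{j}\neq 0\iff\omega^{j}\in\sigma(T)$, which you settle in both directions with eigenvectors, using $\sigma(T)=\sigma_{p}(T)$ from \autoref{SpecOfnPotentOpProp}; the residue computation (identical to the paper's, and your residues $\tfrac{\omega^{-jk}}{n-1}$ check out) is needed only for the final identification with the Riesz projection. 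Your arrangement buys a cleaner separation of the algebraic content from the spectral content and makes the equivalence itself independent of complex analysis; the paper's arrangement has the advantage that the explicit formula is derived rather than posited. The one point worth stating explicitly in your version is that when $\omega^{j}\notin\sigma(T)$ the contour integral around $\omega^{j}$ vanishes (the resolvent is analytic there), which is consistent with $P_{j}=0$, so the ``in particular'' identification is vacuously valid in that case as well.
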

\begin{proof}
    Since $T$ is an $n$-potent operator, by Proposition \ref{SpecOfnPotentOpProp}, we get $\sigma(T)\subseteq\{0, 1, \omega, \dots, \omega^{n-2}\}$. We will prove the equivalence by proving $(i)$ implies $(ii)$ first. Let us assume that $\omega^{j} \in \sigma(T)$ for some $j \in \{0,1,\cdots, n-2\}$. Let $\Gamma_{j}$ be a Cauchy contour in the resolvent set of $T$ around only $\omega^j$. Then the corresponding Riesz projection is given by (Definition \ref{DefOfEigenProj}),
    \begin{equation*}
       P_{\{\omega^{j}\}}:= -\frac{1}{2\pi i}\int_{\Gamma_{j}}(T-zI)^{-1}\, dz. \nonumber
    \end{equation*} 
    Since the contour $\Gamma_j$ does not contain $0$, then for some $0 \leq j \leq (n-2)$, using a similar expansion in Lemma \ref{computation-of-inverse} we have
\begin{align}\label{rieszp}
    P_{\{\omega^j\}} &= \frac{1}{2\pi i} \int_{\Gamma_j} \left[\frac{I}{z}+ \sum_{k=1}^{n-1}\left(\frac{z^{n-k-2}}{z^{n-1}-1}\right)T^{k}\right] \, dz \nonumber\\
    &= \frac{1}{2\pi i} \int_{\Gamma_j}\frac{1}{z}\, dz 
    + \sum_{k=1}^{n-1}\frac{1}{2\pi i}\int_{\Gamma_j}\left(\frac{z^{n-k-2}}{z^{n-1}-1}\right)T^{k} \, dz \nonumber\\
    &= \sum_{k=1}^{n-1}\frac{1}{2\pi i}\int_{\Gamma_j}\left(\frac{z^{n-k-2}}{z^{n-1}-1}\right)T^{k} \, dz
\end{align}
We denote $c_k^{(j)}:= \dfrac{1}{2\pi i}\displaystyle\int_{\Gamma_j}\frac{z^{n-k-2}}{z^{n-1}-1}\,dz.$ Using the residue theorem, after evaluating the integral we get, 
\[c_k^{(j)}= \frac{\omega^{j(n-k-2)}}{\prod_{m=0, m \ne j}^{n-2}(\omega^{j}-\omega^{m})}\]
Then by equation (\ref{rieszp}), we have, $P_{\{\omega^{j}\}}= \displaystyle\sum_{k=1}^{n-1}c^{(j)}_{k}T^{k}$. We can also simplify the expression of $c_k^{(j)}$ as follows:
\begin{align*}
    c_k^{(j)}= \frac{\omega^{j(n-k-2)}}{\prod_{m=0, m \ne j}^{n-2}(\omega^{j}-\omega^{m})}= \frac{\omega^{j(n-2)}\omega^{-jk}}{\prod_{m=0, m \ne j}^{n-2}(\omega^{j}-\omega^{m})}:= C_j\omega^{-jk},
\end{align*}
where $C_j:= \dfrac{\omega^{j(n-2)}}{\prod_{m=0, m \ne j}^{n-2}(\omega^{j}-\omega^{m})}$. To finish the proof, we need to show that $C_j=\dfrac{1}{n-1}.$ We note that,
\begin{align*}
C_j = \dfrac{\omega^{j(n-2)}}{\omega^{j(n-2)}\prod_{m=0, m \ne j}^{n-2}(1-\omega^{m-j})}  &= \frac{1}{\prod_{m=0, m \ne j}^{n-2}(1-\omega^{m-j})}\\ &= \frac{1}{\prod_{s=1}^{n-2}(1-\omega^{s})}
    \end{align*}
Since $\omega$ is a primitive $(n-1)$-th root of unity, then $\prod_{s=1}^{n-2}(1-\omega^{s})=(n-1).$ Therefore, $P_{\{\omega^{j}\}}= \dfrac{1}{n-1}\displaystyle\sum_{k=1}^{n-1}\omega^{-jk}T^k$. To show $TP_{\{\omega^{j}\}}=\omega^{j}P_{\{\omega^{j}\}}$ we use the $n$-potency of $T$ and compute:
\begin{align*}
TP_{\{\omega^{j}\}} = \frac{1}{n-1}\sum_{k=1}^{n-1}\omega^{-jk}T^{k+1} &= \frac{1}{n-1}\sum_{r=2}^{n-1}\omega^{-j(r-1)}T^{r}\nonumber\\&= \omega^{j}\left(\frac{1}{n-1}\sum_{r=2}^{n-1}\omega^{-jr}T^{r}\right)
\end{align*}
We rewrite the equation as:
\begin{align*}
   TP_{{\{\omega^{j}\}}}= \frac{\omega^{j}}{n-1}\left(\sum_{k=1}^{n-1}\omega^{-jk}T^{k}-\omega^{-j}T+\omega^{-jn}T^n\right)&=\frac{\omega^{j}}{n-1}\left(\sum_{k=1}^{n-1}\omega^{-jk}T^{k}\right)\\&=\omega^{j}P_{{\{\omega^{j}\}}}, 
\end{align*}
hence proved. The other computation is similar. 

To prove the converse direction, we assume that the operator $P_{\{\omega^{j}\}}= \frac{1}{n-1}\sum_{k=1}^{n-1}\omega^{-jk}T^k$ is non zero projection satisfying conditions mentioned in the hypothesis. Since the spectrum of an $n$-potent contains eigenvalues only, so it is enough to show that $\omega^{j} \in \sigma_p(T)$. Pick any non zero $x \in \operatorname{Ran}(P_{\{\omega^j\}})$. Then $P_{\{\omega^j\}}(x)=x$ and 
\[Tx= TP_{\{\omega^j\}}(x)=\omega^{j}P_{\{\omega^j\}}(x)=\omega^{j}x,\]
which proves $\omega^{j} \in \sigma(T)$ and thereby proving the Theorem. 
\end{proof}

\begin{remark}
    If $T$ is an $n$-potent operator, from the spectral representation in \autoref{resultonnpotentdecomposition} and the form of eigenprojections $P_{\{\omega^{j}\}}$ from \autoref{thm:spectrumandeigenprojection}, we get 
    \[T= \sum_{j=0}^{n-2}\omega^{j}P_{\{\omega^j\}}.\] Since $\omega$ is a  primitive $(n-1)^{\text{th}}$ root of unity, we can rewrite the decomposition as:
    \begin{equation}\label{eqdecomp}
        T = \frac{1}{n-1}\left[\sum_{j=0}^{n-2}\omega^{j}\left((n-1)P_{\{\omega^{j}\}}+I\right)\right].
    \end{equation} In particular, for $n=3$, where the non-zero spectrum consists of $\{-1,1\}$, the above representation gives 
    \[T=\frac{1}{2}\left[(2P_{\{1\}}+I)-(2P_{\{-1\}}+I)\right],\] which coincides with Corollary 1 of \cite{BY}. Therefore, the representation of $T$ in \eqref{eqdecomp} generalizes the representation obtained in the tri-potent case in \cite{BY}.
\end{remark}

The next Theorem characterizes the connections of an $n$-potent $T$ with its eigenprojections. We recall that a linear operator $T$ on a Banach Space $X$ is said to be periodic if there exists $m \in \mathbb{N}$ such that $T^{m}=I$ and $T^{k} \ne I$ for all $1 \le k <m$; in that case we say $T$ has period $m$. 
We will now state and prove the Theorem. 
\begin{theorem}\label{MainThm1}
Let $n \geq 2$ be an integer, and let $T \in \mathcal{B}(X)$ be an $n$-potent operator on $X$. Then the following statements hold:
\begin{enumerate}[label=(\roman*)]
\item $T^{n-1}$ is a projection. Moreover, $T^{n-1}$ is the Riesz projection associated with the nonzero eigenvalues of $T$. 
\item  $0\in\sigma (T) $ if and only if  $I-T^{n-1}$ is the nonzero Riesz projection associated with the eigenvalue $0$. In this case, 
\[\operatorname{Ran}(I-T^{n-1})=\operatorname{Ker}(T^{n-1})=\operatorname{Ker}(T).\]
\item $T$ is periodic if and only if $0 \notin \sigma(T).$ Moreover,  in this case, $T$ is invertible. 

\end{enumerate}
\end{theorem}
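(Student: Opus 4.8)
The plan is to prove the three parts in order, using the spectral decomposition $T=\sum_{j}\lambda_j P_{\{\lambda_j\}}$ from \autoref{resultonnpotentdecomposition} together with the explicit formulas and orthogonality relations for the Riesz projections established in \autoref{thm:spectrumandeigenprojection} and \autoref{basicpropsofeigenprojectionsprop}. For (i), I would compute $T^{n-1}$ directly from the decomposition: since $\{P_{\{\lambda_j\}}\}$ is a finite resolution of the identity (pairwise disjoint, summing to $I$), we get $T^{n-1}=\sum_j \lambda_j^{n-1} P_{\{\lambda_j\}}$. For $\lambda_j=0$ the term vanishes, and for $\lambda_j\neq 0$ we have $\lambda_j^{n-1}=1$ because $\lambda_j\in\mu_{n-1}$ (Proposition \ref{SpecOfnPotentOpProp}); hence $T^{n-1}=\sum_{\lambda_j\neq 0}P_{\{\lambda_j\}}$, which is a sum of pairwise orthogonal projections and therefore itself a projection. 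To see that this is precisely the Riesz projection $P_{\sigma(T)\setminus\{0\}}$ associated to the nonzero part of the spectrum, I would invoke the additivity of Riesz projections over disjoint spectral sets (which follows from \autoref{theorem:inv-subspace} and the contour-deformation argument in \autoref{basicpropsofeigenprojectionsprop}(iv)): the sum of the Riesz projections for the individual nonzero eigenvalues equals the Riesz projection for their union.

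For (ii), suppose first that $0\in\sigma(T)$. By part (i), $I-T^{n-1}=I-P_{\sigma(T)\setminus\{0\}}=P_{\{0\}}$, the Riesz projection for the eigenvalue $0$, which is nonzero exactly because $0$ is genuinely in the spectrum (\autoref{thm:spectrumandeigenprojection} applied with $\omega^j$ replaced by $0$, or directly: if it were zero then $I=T^{n-1}$ and $0\notin\sigma(T)$). By \autoref{resultonnpotentdecomposition}, $\operatorname{Ran}(P_{\{0\}})=\operatorname{Ker}(T)$, and $\operatorname{Ran}(I-T^{n-1})=\operatorname{Ker}(T^{n-1})$ since $T^{n-1}$ is a projection; also $\operatorname{Ker}(T)\subseteq\operatorname{Ker}(T^{n-1})$ trivially, and conversely $T^{n-1}x=0$ with the decomposition forces $x\in\operatorname{Ran}(P_{\{0\}})=\operatorname{Ker}(T)$, giving the chain of equalities. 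Conversely, if $I-T^{n-1}$ is a nonzero projection then $T^{n-1}\neq I$, so $1$ cannot be the only possibility forcing invertibility; more precisely, $T^{n-1}=P_{\sigma(T)\setminus\{0\}}$ has nontrivial kernel, and any $x\neq 0$ with $T^{n-1}x=0$ satisfies $Tx\in\operatorname{Ran}(T)$ with $T^{n-1}(Tx)=T^nx=Tx$, yet iterating shows $Tx$ lies in $\operatorname{Ran}(T^{n-1})\cap\operatorname{Ker}(T^{n-1})$ hence $Tx=0$, so $0\in\sigma_p(T)=\sigma(T)$.

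For (iii): if $0\notin\sigma(T)$, then by (i) $T^{n-1}=\sum_{\lambda_j\neq 0}P_{\{\lambda_j\}}=\sum_{j}P_{\{\lambda_j\}}=I$, so $T^{n-1}=I$, which exhibits $T$ as periodic (with period dividing $n-1$; the minimal such $m$ is the period) and also shows $T^{n-2}$ is a two-sided inverse of $T$, so $T$ is invertible. Conversely, if $T$ is periodic, say $T^m=I$, then $T$ is invertible and $0\notin\sigma(T)$ since $\sigma(T^m)=\{1\}$ and $0^m=0\neq 1$; alternatively, invertibility of $T$ directly gives $0\notin\sigma(T)$. I expect part (ii) to be the main obstacle, specifically the careful verification in the converse direction and the kernel identifications — the argument that $\operatorname{Ker}(T^{n-1})=\operatorname{Ker}(T)$ must use the semisimplicity (no nilpotent part) packaged in \autoref{resultonnpotentdecomposition}, and one should be careful to phrase the "nonzero Riesz projection" statement so that the iff genuinely captures membership of $0$ in the spectrum rather than merely $T^{n-1}\neq I$.
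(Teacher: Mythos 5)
Your proposal is correct and follows the same overall skeleton as the paper's proof: both start from the spectral decomposition of Theorem~\ref{resultonnpotentdecomposition}, compute $T^{n-1}=\sum_{\lambda_j\neq 0}P_{\{\lambda_j\}}$ to get a projection, and then settle (ii) and (iii) via the invertibility dichotomy. You differ only in how two sub-steps are justified. For the identification $T^{n-1}=P_{\sigma(T)\setminus\{0\}}$, the paper sums the explicit formulas $P_{\{\omega^{j}\}}=\frac{1}{n-1}\sum_{k}\omega^{-jk}T^{k}$ from Theorem~\ref{thm:spectrumandeigenprojection} and uses the character-sum identity $\sum_{j=0}^{n-2}\omega^{-jk}=0$ for $1\le k\le n-2$, whereas you invoke additivity of Riesz projections over disjoint spectral sets; both are legitimate, and the additivity you need is exactly the contour-splitting argument already used in Proposition~\ref{basicpropsofeigenprojectionsprop}(iv). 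For $P_{\{0\}}=I-T^{n-1}$, the paper evaluates the contour integral from Lemma~\ref{computation-of-inverse} around $0$ directly, while you deduce it from the resolution of identity $P_{\{0\}}+P_{\sigma(T)\setminus\{0\}}=I$ together with part (i); your derivation is shorter and avoids a residue computation. One small simplification for your converse in (ii): the step ``iterating shows $Tx\in\operatorname{Ran}(T^{n-1})\cap\operatorname{Ker}(T^{n-1})$'' can be replaced by the one-line observation that $T^{n-1}x=0$ implies $Tx=T^{n}x=T(T^{n-1}x)=0$, which is precisely how the paper obtains $\operatorname{Ker}(T^{n-1})=\operatorname{Ker}(T)$.
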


\begin{proof}
\begin{enumerate}[label=(\roman*)]
\item  Since $T$ is $n$-potent, by \autoref{resultonnpotentdecomposition}, there exist scalars $\lambda_1, \lambda_2, \cdots, \lambda_k \in \{0, 1, \omega, \ldots, \omega^{n-2}\}$, and a family of projections $\{P_j\}_{j=1}^{k}$ such that:
\[
T = \sum_{\lambda_j \ne 0} \lambda_j P_j.
\]

Then, using the fact that for each $j=1,2, \cdots, k$, $P_{j}$ is a projection, and each non zero $\lambda_j$ has order $(n-1)$ we get:
\[
T^{n-1} = \left( \sum_{\lambda_j \ne 0} \lambda_j P_j \right)^{n-1} = \sum_{\lambda_j \ne 0} (\lambda_j)^{n-1} P_j = \sum_{\lambda_j \ne 0} P_j.
\]

Since the projections $\{P_j\}$ satisfy conditions of Definition \ref{def:roi}, we get: 
\[
\left(T^{n-1}\right)^2 = \left( \sum_{\lambda_j \ne 0} P_j \right)^2 = \sum_{\lambda_j \ne 0} P_j,
\]
Hence, $(T^{n-1})^2 = T^{n-1}$. Therefore, $T^{n-1}$ is a projection. 
\medskip

We will now show that $T^{n-1}$ is the Riesz projection associated with the nonzero eigenvalues of $T$. Let $P_{\sigma(T)\setminus \{0\}}$ denote the Riesz projection associated with the set of all non zero eigenvalues. For our notational convenience we will denote elements of $\sigma(T)\setminus \{0\}$ as $\{\omega^{j}: j=0,1,\cdots, n-2\}$, where $\omega^{n-1}=1$. Note that
\[\sum_{j=0}^{n-2}\omega^{-jk}=\begin{cases}
0,     & \text{if } 1 \le k\le n-2,\\
n-1        & \text{if } k = n-1.
\end{cases}\]

Then using Proposition~\ref{basicpropsofeigenprojectionsprop}(v) and \autoref{thm:spectrumandeigenprojection}, we have:
\begin{align*}
    P_{\sigma(T)\setminus \{0\}} &= \sum_{j=0}^{n-2}P_{\{\omega^{j}\}} \\
    &=\sum_{j=0}^{n-2}\left(\sum_{k=1}^{n-1}\frac{1}{n-1}\omega^{-jk}\right)T^{k}\\&= \frac{1}{n-1}\sum_{k=1}^{n-1}T^{k}\left(\sum_{j=0}^{n-2}\omega^{-jk}\right)
\end{align*}

All the terms in the above sum vanish except $k=n-1$. Therefore, $$P_{\sigma(T)\setminus \{0\}}= \frac{1}{n-1}\left[(n-1)T^{n-1}\right]=T^{n-1},$$ which is what we wanted to prove. 

\item Since $T$ is an $n$-potent operator. By Proposition~\ref{SpecOfnPotentOpProp}, we know that the spectrum of $T$ is a subset of $\{0, 1, \omega, \dots, \omega^{n-2}\}$,
where $\omega = e^{2\pi i / (n-1)}$ is a primitive $(n-1)$-th root of unity.

To prove the claim, first assume that $0 \in \sigma(T)$. Using Lemma~\ref{computation-of-inverse}, we obtain:
\begin{equation}\label{eq:riesz-projection-eq-1}
P_{\{0\}} = \frac{1}{2\pi i} \int_{\Gamma_{0}} \left[ \frac{I}{z} + \bigg(\frac{1}{z(z^{n-1}-1)}\bigg)T^{n-1}+\sum_{k=1}^{n-2} \left( \frac{z^{n-k-2}}{z^{n-1} - 1} \right) T^k \right] \, dz,
\end{equation}
where $\Gamma_0$ is a Cauchy contour that encloses $0$ in its interior. Then a direct computation reveals that the Riesz projection $P_{\{0\}}$ associated with the eigenvalue $0$ is given by
$P_0 = I - T^{n-1}$. Conversely, suppose that $I - T^{n-1}$ is a nonzero projection and that $0 \notin \sigma(T)$. Then $T$ is invertible, and hence $T^{n-1}=I$. But that is a contradiction to the our assumption. Hence, it follows that $0 \in \sigma(T)$. To verify the identity $\operatorname{Ran}(I - T^{n-1}) = \operatorname{Ker}(T^{n-1})$, let $y \in \operatorname{Ran}(I - T^{n-1})$. Then there exists $x \in X$ such that
\[
y = (I - T^{n-1})(x) = x - T^{n-1}(x).
\]
Applying $T^{n-1}$ to both sides yields and using the $n$-potency of $T$:
\[
T^{n-1}(y) = T^{n-1}(x - T^{n-1}(x)) = T^{n-1}(x) - T^{2n-2}(x) = T^{n-1}(x) - T^{n-1}(x) = 0,
\]
so $y \in \operatorname{Ker}(T^{n-1})$. For the reverse direction, suppose $y \in \operatorname{Ker}(T^{n-1})$. Then $T^{n-1}(y) = 0$, so $(I - T^{n-1})(y) = y$,
i.e., $y \in \operatorname{Ran}(I - T^{n-1})$. Thus,
$\operatorname{Ran}(I - T^{n-1}) = \operatorname{Ker}(T^{n-1})$. Similarly, using the $n$-potency of $T$, we can show that $\operatorname{Ker}(T^{n-1})=\operatorname{Ker}(T)$,
and hence $\operatorname{Ran}(I-T^{n-1})=\operatorname{Ker}(T^{n-1})=\operatorname{Ker}(T)$.

\item Suppose $T$ is a periodic operator. Then there exists $m \in \mathbb{N}$ such that $T^{m}=I$. Hence $T$ is invertible and so $0 \notin \sigma(T)$. Conversely if $0 \notin \sigma(T)$, then $T$ is invertible and since $T$ is an $n$-potent operator, using the invertibility of $T$ we get $T^{n-1}=I$, which implies the periodicity of $T$.
\end{enumerate}
\end{proof}

\begin{remark}
    In general, if $T$ is an $n$-potent operator, then the eigenprojections may not be Hermitian. For example, consider the Hilbert Space $X=\mathbb{C}^2$ with the Euclidean norm and let $T=\begin{pmatrix}
        1 & 1\\
        0 & 0
    \end{pmatrix}$. It is clear that $T$ is a projection, and hence a $2$-potent operator and $\sigma(T)=\{0,1\}$. The Riesz projection associated with $\lambda=0$ is given by:
    \[P_0=I-T=\begin{pmatrix}
        0 & -1\\
        0 & 1
    \end{pmatrix}.\]
    
    We will show that $P_0$ is not Hermitian. By Lemma~2.1 in \cite{J}, a projection
$P$ is Hermitian if and only if it is bicircular; that is, if and only if
$P+\alpha(I-P)$ is an isometry for every unimodular
$\alpha$. Let $|\alpha|=1$, $\alpha \neq 1$ and we compute:
    \[A_{\alpha}=P_0+\alpha(I-P_0)=\begin{pmatrix}
        \alpha & \alpha-1\\
        0 & 1
    \end{pmatrix}.\]
    Since $X$ is a Hilbert Space, so if $A_{\alpha}$ were an isometry, then $A_{\alpha}^{\ast}A_{\alpha}=I$. But
    \[A_{\alpha}^{\ast}A_{\alpha} = \begin{pmatrix}
        \overline{\alpha} & 0\\
        \overline{\alpha}-1 & 1
    \end{pmatrix} \begin{pmatrix}
        \overline{\alpha} & \alpha -1\\
        0 & 1
    \end{pmatrix}= \begin{pmatrix}
        1 & 1-\overline{\alpha}\\
        1-\alpha & (\alpha-1)(\overline{\alpha}-1)+1
    \end{pmatrix},\]which implies $A_{\alpha}^{\ast}A_{\alpha}=I$ if and only if $\alpha=1$. Therefore, the operator $P_0+\alpha(I-P_0)$ is not an isometry for any $\alpha \ne 1$ and hence the eigenprojection $P_0$ is not Hermitian. 
\end{remark}
For an $n$-potent operator $T \in \mathcal{B}(X)$, we recall the algebra generated by the powers of $T$: \[\operatorname{comb}(T):= \left\{\sum_{i=1}^{k}a_iT^{i}\colon a_i\in \mathbb{C}, 1 \leq k \leq n-1\right\}.\]
Since $T^n=T$, every polynomial in $T$ reduces to a linear combination of $\{T,T^2, \dots, T^{n-1}\}$ and hence $\operatorname{comb}(T)$ is a finite dimensional subalgebra of $\mathcal{B}(X)$. 
In this Section we study the structure of projections that belong to $\operatorname{comb}(T)$. Our main result shows that such a projection can be written uniquely as a linear combination of the Riesz projections associated with the nonzero eigenvalues of $T$.  We will now state and prove the Theorem. 
\begin{theorem}\label{thm:proj-conv-comb-result}
Let $n \geq 2$ and $T \in \mathcal{B}(X)$ be an $n$-potent operator on a Banach Space $X$. If $P \in comb(T)$ is a projection, then $P$ admits a unique spectral decomposition:
\[
P = \sum_{\lambda \in \sigma(T)} \beta_\lambda P_{\{\lambda\}}, \quad \text{with } \beta_\lambda = \sum_{i=1}^{k} a_i \lambda^i,
\]
where $P_{\{\lambda\}}$ denotes the Riesz projection of $T$ associated with the eigenvalue $\lambda$. Furthermore, $\beta_{\lambda} \in \{0,1\}$ for each $\lambda \in \sigma(T)$, and for $\beta_\lambda \ne 0$, $\operatorname{Ran}(P)$ is the direct sum of subspaces $P_{\{\lambda\}}(X)$.
\end{theorem}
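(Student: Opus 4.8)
The plan is to transport the hypothesis $P \in \operatorname{comb}(T)$ through the spectral decomposition of $T$ from \autoref{resultonnpotentdecomposition}, reduce the idempotent identity $P^2 = P$ to a collection of scalar equations, and read off the conclusion from the linear independence of the Riesz projections. Concretely, I would write $P = \sum_{i=1}^{k} a_i T^i$ and invoke \autoref{resultonnpotentdecomposition} together with \autoref{rem:npotenteigenprojection} to express $T = \sum_{\lambda \in \sigma(T)} \lambda\, P_{\{\lambda\}}$, where $\{P_{\{\lambda\}}\}_{\lambda \in \sigma(T)}$ is a finite resolution of the identity: the $P_{\{\lambda\}}$ are pairwise disjoint, sum to $I$, and are each nonzero since every $\lambda \in \sigma(T)$ is an eigenvalue by \autoref{SpecOfnPotentOpProp}. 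Using only pairwise disjointness, $T^i = \sum_{\lambda} \lambda^i P_{\{\lambda\}}$ for every $i \ge 1$ (the $\lambda = 0$ summand drops out, which is why $\beta_0 = 0$). Substituting and collecting coefficients gives $P = \sum_{\lambda \in \sigma(T)} \beta_\lambda P_{\{\lambda\}}$ with $\beta_\lambda = \sum_{i=1}^{k} a_i \lambda^i$.

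Next I would record the key auxiliary fact that the family $\{P_{\{\lambda\}}\}_{\lambda \in \sigma(T)}$ is linearly independent: if $\sum_\lambda c_\lambda P_{\{\lambda\}} = 0$, multiplying by $P_{\{\mu\}}$ and using $P_{\{\mu\}}^2 = P_{\{\mu\}}$ and $P_{\{\mu\}} P_{\{\lambda\}} = 0$ for $\lambda \ne \mu$ yields $c_\mu P_{\{\mu\}} = 0$, hence $c_\mu = 0$ because $P_{\{\mu\}} \ne 0$. This gives at once the uniqueness of the decomposition $P = \sum_\lambda \beta_\lambda P_{\{\lambda\}}$. I would then use $P^2 = P$: again by disjointness $P^2 = \sum_\lambda \beta_\lambda^2 P_{\{\lambda\}}$, so $\sum_\lambda (\beta_\lambda^2 - \beta_\lambda) P_{\{\lambda\}} = 0$, and linear independence forces $\beta_\lambda^2 = \beta_\lambda$, i.e. $\beta_\lambda \in \{0,1\}$ for each $\lambda \in \sigma(T)$.

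For the range statement, set $S = \{\lambda \in \sigma(T) : \beta_\lambda = 1\}$, so that $P = \sum_{\lambda \in S} P_{\{\lambda\}}$, and I would prove $\operatorname{Ran}(P) = \bigoplus_{\lambda \in S} \operatorname{Ran}(P_{\{\lambda\}})$. The inclusion $\operatorname{Ran}(P) \subseteq \sum_{\lambda \in S} \operatorname{Ran}(P_{\{\lambda\}})$ is immediate from the formula for $P$; conversely, for $\mu \in S$ each $y \in \operatorname{Ran}(P_{\{\mu\}})$ satisfies $P y = y$ by disjointness and idempotency, so $\operatorname{Ran}(P_{\{\mu\}}) \subseteq \operatorname{Ran}(P)$; and the sum is direct since applying $P_{\{\mu\}}$ to a relation $\sum_{\lambda \in S} y_\lambda = 0$ with $y_\lambda \in \operatorname{Ran}(P_{\{\lambda\}})$ isolates $y_\mu = 0$.

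This theorem is essentially organized bookkeeping built on \autoref{resultonnpotentdecomposition} and \autoref{thm:spectrumandeigenprojection}, so I do not expect a serious obstacle. The one point that genuinely carries the argument is the linear independence of the $P_{\{\lambda\}}$ — the coefficient formula, the conclusion $\beta_\lambda \in \{0,1\}$, the uniqueness, and the range decomposition all follow from it mechanically — and the only place requiring care is keeping track of the fact that the eigenvalue $0$, if present, contributes nothing to any positive power of $T$ and hence nothing to $P$.
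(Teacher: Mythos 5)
Your proposal is correct and follows essentially the same route as the paper: decompose $T$ via \autoref{resultonnpotentdecomposition}, push powers of $T$ through the disjoint Riesz projections to get $P=\sum_\lambda\beta_\lambda P_{\{\lambda\}}$, and use $P^2=P$ together with the nonvanishing of each $P_{\{\lambda\}}$ to force $\beta_\lambda\in\{0,1\}$. Your explicit packaging of the key step as linear independence of the family $\{P_{\{\lambda\}}\}$ (multiplying the relation by $P_{\{\mu\}}$) is just a mild restatement of the paper's argument, which instead evaluates the relation on a nonzero vector in $\operatorname{Ran}(P_{\{\mu\}})$.
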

\begin{proof}
Since $T$ is an $n$-potent operator, by \autoref{resultonnpotentdecomposition}, we have a spectral decomposition of $T$ of the form:
\[
T = \sum_{\lambda \in \sigma(T)} \lambda\, P_{\{\lambda\}},
\]
where $\sigma(T) \subseteq \{0, 1, \omega, \dots, \omega^{n-2}\}$, $\omega = e^{\frac{2\pi i}{n-1}}$, and the Riesz projections $P_{\{\lambda\}}$ form a resolution of identity. 

Now let $P = \sum_{i=1}^{k} a_i T^i \in \operatorname{comb}(T)$. Then, for each $i\ge 1$ we have:
\[
T^i = \left( \sum_{\lambda \in \sigma(T)} \lambda P_{\{\lambda\}} \right)^i
= \sum_{\lambda \in \sigma(T)} \lambda^i P_{\{\lambda\}},
\]
since $P_{\{\lambda\}}P_{\{\mu\}}=0$ whenever $\lambda \neq \mu$ and each $P_{\{\lambda\}}$ is a projection. 
Thus,
\[
P = \sum_{i=1}^{k} a_i T^i
= \sum_{i=1}^{k} a_i \sum_{\lambda \in \sigma(T)} \lambda^i P_{\{\lambda\}}
= \sum_{\lambda \in \sigma(T)} \left( \sum_{i=1}^{k} a_i \lambda^i \right) P_{\{\lambda\}}.
\]

Define $\beta_\lambda := \sum_{i=1}^{k} a_i \lambda^i \in \mathbb{C}$. Then,
\[
P = \sum_{\lambda \in \sigma(T)} \beta_\lambda P_{\{\lambda\}}.
\]
Using the fact that $P$ is a projection, we get
\[P^2 = \left(\sum_{\lambda}\beta_{\lambda}P_{\{\lambda\}}\right)^2=\sum_{\lambda}\beta_{\lambda}^2P_{\{\lambda\}},\] and hence 
\[\sum_{\lambda}\beta_{\lambda}^2P_{\{\lambda\}}=\sum_{\lambda}\beta_{\lambda}P_{\{\lambda\}}.\]
Thus, $\sum_{\lambda}(\beta_{\lambda}^2-\beta_{\lambda})P_{\{\lambda\}}=0.$ Let us fix $\mu \in \sigma(T)$ and $x \in \operatorname{Ran}(P_{\{\mu\}})$. Hence $P_{\{\mu\}}(x)=x$ and for $\lambda \ne \mu$, $P_{\{\lambda\}}(x)=0$. Hence $(\beta_{\mu}^2-\beta_{\mu})x=0$ for all $x \in \operatorname{Ran}(P_{\{\mu\}})$, which implies $\beta_{\mu}^2-\beta_{\mu}=0$ and so $\beta_{\mu} \in \{0,1\}$ for all $\mu \in \sigma(T)$. The uniqueness follows similarly. 
\medskip

To finish the proof, first note that since $X=\displaystyle\bigoplus_{\lambda \in \sigma(T)}P_{\{\lambda\}}(X)$, then 
\[Px=\sum_{\lambda}\beta_{\lambda}P_{\{\lambda\}}(x)=\sum_{\lambda}\beta_{\lambda}x_{\lambda},\] where $x=\sum_{\lambda}x_{\lambda}$ with $x_{\lambda}\in P_{\{\lambda\}}(X)$. If $\beta_{\lambda}\ne 0$ then it follows that 
\[
\operatorname{Ran}(P)
= \bigoplus\limits_{\substack{\lambda \in \sigma(T)\\ \beta_\lambda \ne 0}}
  P_{\{\lambda\}}(X),\]which is what we wanted to prove. 
\end{proof}
An immediate consequence of \autoref{thm:proj-conv-comb-result} is that the projections in $\operatorname{comb}(T)$ have a rich algebraic structure:
\begin{corollary}\label{boolean-algebra-isomorphism}
Let $T$ be an $n$-potent operator and let $\sigma_{0}(T)=\sigma(T)\setminus \{0\}$. Then the following statements hold:
\begin{enumerate}[label=(\roman*)]
    \item Each projection $P \in \operatorname{comb}(T)$ has the form
    \[P=\sum_{\lambda \in S}P_{\{\lambda\}},\]
    for some $S \subseteq \sigma_{0}(T)$.
    \item The map $S \mapsto P_{S}:= \displaystyle\sum_{\lambda \in S}P_{\{\lambda\}}$ is a bijection from $\mathcal{P}(\sigma_{0}(T))$ onto the set of all projections in $\operatorname{comb}(T)$.
    \item For $S,R \subseteq \sigma_{0}(T)$, we have 
    \[P_{S}P_{R}=P_{S \cap R}, \quad P_{S}+P_{R}-P_{S}P_{R}=P_{S\cup R}, \quad I-P_{S}=P_{\sigma_{0}(T)\setminus S}.\]
\end{enumerate}
It follows that the family $\{P_{S} : S \subseteq \sigma_{0}(T)\}$ forms a Boolean algebra of projections under the operations
\[
    P_S \vee P_R := P_{S \cup R}, \qquad
    P_S \wedge P_R := P_{S \cap R}, \qquad P_{S^{c}} = P_{\sigma_0(T) \setminus S}
\] which is isomorphic to the Boolean algebra $(\mathcal{P}(\sigma_0(T)), \cup, \cap,^{c})$.
\end{corollary}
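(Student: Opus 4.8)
The plan is to derive the whole corollary from \autoref{thm:proj-conv-comb-result} together with two structural facts already established: each Riesz projection $P_{\{\lambda\}}$ for a nonzero eigenvalue $\lambda$ lies in $\operatorname{comb}(T)$ (\autoref{thm:spectrumandeigenprojection}), and the family $\{P_{\{\lambda\}}\}_{\lambda\in\sigma(T)}$ is a resolution of the identity, so that $P_{\{\lambda\}}P_{\{\mu\}} = \delta_{\lambda\mu}P_{\{\lambda\}}$ and $\sum_{\lambda\in\sigma(T)}P_{\{\lambda\}} = I$ (\autoref{resultonnpotentdecomposition}). Once these are recalled, parts (i)--(iii) and the Boolean-algebra statement become essentially bookkeeping.

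For part (i), I would start from a projection $P = \sum_{i=1}^k a_i T^i \in \operatorname{comb}(T)$ and invoke \autoref{thm:proj-conv-comb-result} to write $P = \sum_{\lambda\in\sigma(T)}\beta_\lambda P_{\{\lambda\}}$ with $\beta_\lambda = \sum_i a_i\lambda^i \in \{0,1\}$. The point to flag is that $\beta_0 = \sum_{i\ge 1}a_i 0^i = 0$, so the eigenvalue $0$ never contributes; hence $S := \{\lambda : \beta_\lambda = 1\}$ is automatically a subset of $\sigma_0(T)$ and $P = P_S$. Conversely, I would check that every $P_S$ is a projection lying in $\operatorname{comb}(T)$: membership follows because each $P_{\{\lambda\}} \in \operatorname{comb}(T)$ by \autoref{thm:spectrumandeigenprojection}, and $P_S^2 = P_S$ follows from orthogonality, since $\sum_{\lambda,\mu\in S}P_{\{\lambda\}}P_{\{\mu\}} = \sum_{\lambda\in S}P_{\{\lambda\}}$.

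For part (ii), surjectivity of $S\mapsto P_S$ onto the projections of $\operatorname{comb}(T)$ is precisely what (i) gives. For injectivity, I would suppose $P_S = P_R$ and, for each $\mu\in\sigma_0(T)$, pick a nonzero $x\in\operatorname{Ran}(P_{\{\mu\}}) = \operatorname{Ker}(T-\mu I)$ (nonzero since $\mu$ is an eigenvalue by \autoref{SpecOfnPotentOpProp}); then $P_S x$ equals $x$ or $0$ according as $\mu\in S$ or not, and similarly for $R$, forcing $\mu\in S \iff \mu\in R$. For part (iii), orthogonality immediately gives $P_SP_R = \sum_{\lambda\in S\cap R}P_{\{\lambda\}} = P_{S\cap R}$, and comparing the coefficient of each $P_{\{\lambda\}}$ by inclusion--exclusion gives $P_S + P_R - P_SP_R = P_{S\cup R}$. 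For complementation, the natural unit of this family is $P_{\sigma_0(T)} = T^{n-1}$ (\autoref{MainThm1}(i)); since $P_{\sigma_0(T)} = P_S + P_{\sigma_0(T)\setminus S}$ we get $P_{\sigma_0(T)} - P_S = P_{\sigma_0(T)\setminus S}$, which coincides with $I - P_S$ exactly when $0\notin\sigma(T)$, i.e. when $T$ is periodic (\autoref{MainThm1}(iii)). Finally I would observe that, by (ii), $S\mapsto P_S$ is a bijection $\mathcal{P}(\sigma_0(T)) \to \{P_S : S\subseteq\sigma_0(T)\}$, and by the identities just proved it transports $\cup, \cap$ and complementation within $\sigma_0(T)$ to $\vee, \wedge$ and $P_S\mapsto P_{\sigma_0(T)\setminus S}$; hence it is a Boolean-algebra isomorphism.

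I do not expect a genuine obstacle here; the one place that needs care is the role of the eigenvalue $0$ — noting that $\beta_0 = 0$ automatically, so the indexing sets really do sit inside $\sigma_0(T)$, and that the complement in the Boolean algebra is taken relative to $T^{n-1} = P_{\sigma_0(T)}$ (which equals $I$ precisely when $0\notin\sigma(T)$). Everything else is a routine consequence of the orthogonality relations $P_{\{\lambda\}}P_{\{\mu\}} = \delta_{\lambda\mu}P_{\{\lambda\}}$ and $\sum_{\lambda}P_{\{\lambda\}} = I$ established earlier.
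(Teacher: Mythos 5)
Your proposal is correct and follows essentially the same route as the paper: derive (i) from \autoref{thm:proj-conv-comb-result} by observing that $\beta_0=\sum_i a_i0^i=0$, obtain injectivity in (ii) from the orthogonality relations $P_{\{\lambda\}}P_{\{\mu\}}=\delta_{\lambda\mu}P_{\{\lambda\}}$ (you test on an eigenvector, the paper multiplies by $P_{\{\lambda_0\}}$; both work since each $P_{\{\lambda\}}\neq 0$), and verify (iii) by direct computation. Two of your refinements are worth keeping. First, you explicitly check that every $P_S$ is a projection lying in $\operatorname{comb}(T)$ (membership via \autoref{thm:spectrumandeigenprojection}, idempotence via orthogonality); the paper's ``surjectivity follows from part (i)'' tacitly assumes this, and without it the map $S\mapsto P_S$ is not even known to land in the stated codomain. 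Second, and more substantively, you are right to flag the identity $I-P_S=P_{\sigma_0(T)\setminus S}$: when $0\in\sigma(T)$ one has $I=P_{\{0\}}+\sum_{\lambda\in\sigma_0(T)}P_{\{\lambda\}}$, hence $I-P_S=P_{\{0\}}+P_{\sigma_0(T)\setminus S}$, which differs from $P_{\sigma_0(T)\setminus S}$ unless $P_{\{0\}}=0$. The paper's own computation of this identity silently drops the $P_{\{0\}}$ term. Your correction --- that complements in this family must be taken relative to its unit $P_{\sigma_0(T)}=T^{n-1}$ (\autoref{MainThm1}(i)), so that $P_{\sigma_0(T)}-P_S=P_{\sigma_0(T)\setminus S}$ --- gives the statement that is true in general; the Boolean-algebra isomorphism with $(\mathcal{P}(\sigma_0(T)),\cup,\cap,{}^{c})$ is unaffected, and the version with $I$ holds exactly when $0\notin\sigma(T)$, i.e.\ when $T$ is periodic.
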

\begin{proof}
    (i) By \autoref{thm:proj-conv-comb-result}, every projection $P \in \operatorname{comb}(T)$ has a unique representation
    \[P=\sum_{\lambda \in \sigma(T)}\beta_{\lambda}P_{\{\lambda\}}, \quad \text{with}\;\; \beta_\lambda\in\{0,1\}.\]
    Let $S:=\{\lambda \in \sigma_{0}(T):\beta_{\lambda}=1\}$. Note that since $\beta_0=0$, hence,
    \[P=\sum_{\lambda \in S}P_{\{\lambda\}},\] and we are done. 

    (ii) To prove injectivity, suppose $P_{S}=P_{R}$ for some $S,R \subseteq \sigma_{0}(T)$. Then
    \begin{equation}\label{injectivity-of-P_R}
    \sum_{\lambda \in S}P_{\{\lambda\}}=\sum_{\mu \in R}P_{\{\mu\}}.
    \end{equation}
Fix $\lambda_0 \in S$. Multiplying by $P_{\{\lambda_0\}}$ on the left to the equation~(\ref{injectivity-of-P_R}) and using the pairwise disjointness, we obtain:
\begin{align*}
P_{\{\lambda_0\}} P_S
&= P_{\{\lambda_0\}} P_R,\\
P_{\{\lambda_0\}} \sum_{\lambda \in S} P_{\{\lambda\}}
&= P_{\{\lambda_0\}} \sum_{\mu \in R} P_{\{\mu\}},\\
\sum_{\lambda \in S} P_{\{\lambda_0\}} P_{\{\lambda\}}
&= \sum_{\mu \in R} P_{\{\lambda_0\}} P_{\{\mu\}}.
\end{align*}
Since $\lambda_0 \in S$, the left-side of the above sum is $P_{\{\lambda_0\}}$ and the only way it is equal to the right side is if $\lambda_0 \in R$. Hence $S \subseteq R$. Similarly we can show that if $\lambda_{0} \notin S$ then $\lambda_{0} \notin R$ as well. Therefore $S=R$ and the map is injective. Surjectivity follows from part $(i)$. 

(iii) To prove this we compute:
\[
P_S P_R=\sum_{\lambda\in S}\sum_{\mu\in R} P_{\{\lambda\}}P_{\{\mu\}}=\sum_{\lambda\in S\cap R} P_{\{\lambda\}}=P_{S\cap R}.
\]
Similarly, 
\[P_{S}+P_{R}-P_{S}P_{R}= \sum_{\lambda \in S}P_{\{\lambda\}}+\sum_{\mu \in R}P_{\{\mu\}}-\sum_{\nu \in S\cap R}P_{\{\nu\}}=\sum_{\lambda \in S \cup R}P_{\{\lambda\}}=P_{S \cup R},\]
and 
\[I-P_{S}=P_0+\sum_{\lambda \in \sigma_{0}(T)}P_{\{\lambda\}}-\sum_{\mu \in S}P_{\{\mu\}}=P_{\sigma_0(T)\setminus S}.\]

Since the map $S \mapsto P_{S}$ is a bijection and it preserves the operations, hence it is an isomorphism of Boolean algebras. 
\end{proof}

\section{Examples and cases of lower orders}\label{sec3-examples}
In this Section, we discuss a few Examples which illustrate how the general theory of $n$-potent operators and their spectral decompositions is applied both in classical Banach spaces and low-order finite dimensional cases. We also classify all the projections in $\operatorname{comb}(T)$ for a $5$-potent operator $T$ and illustrate \autoref{thm:proj-conv-comb-result} to find a decomposition of some selected projections $P$. We start with an example of a $(2k+1)$-potent operator on $C[0,1]$. 

\begin{example}\label{mainexample}
Let $X = C[0,1]$, and fix a positive integer $k$. Define an operator $T \in \mathcal{B}(X)$ by
\[
Tf(t) = \frac{1}{2} e^{\frac{\pi i}{k}} \big(f(1 - t) - f(t)\big), \quad \text{for all } f \in X.
\]
Then $T$ is $(2k+1)$-potent, and can be seen as follows. Let $R \in \mathcal{B}(X)$ be the reflection operator given by:
\[Rf(t):= f(1-t),\quad t \in [0,1].\] Setting $a=\frac{1}{2}e^{\frac{i\pi}{k}}$, we write $T=a(R-I)$. A direct computation shows that for $n \ge 2,$
\[T^{n}= (-2a)^{n-1}T. \]Hence $T^{2k+1}=(-2a)^{2k}T=T$ and so $T$ is $(2k+1)$-potent. We claim that 
\[
\sigma(T) = \{0, \omega^{k+1}\},
\]
where $\omega = e^{\frac{2\pi i}{2k}}$ is a primitive $(2k)$-th root of unity. Note that constant functions satisfy $Tf = 0$, so $\operatorname{Ker}(T) \neq \{0\}$, and hence $0 \in \sigma(T)$. Also, we note that $Tf=0$ if and only if $f(t)=f(1-t)$ for all $t \in [0,1]$. So the eigenspace corresponding to the eigenvalue $0$ is $\operatorname{Ker}(T)=\{f \in C[0,1]: f(1-t)=f(t)\}$.
By part $(ii)$ of \autoref{MainThm1}, $P_{\{0\}}:=I-T^{2k}$ is the Riesz projection and in fact we will show that it is exactly the eigenprojection associated with the eigenvalue $0$.

\vspace{1mm} 
If $ \lambda \in \mathbb{C} $ and $ f \neq 0 $ satisfies $ Tf = \lambda f $, then
\begin{equation}\tag{2}\label{eq:eig-0}
   f(1 - t) = (2\lambda e^{-\frac{\pi i}{k}} + 1) f(t) 
\end{equation}
In particular, taking $\lambda=0$, we see that any non-zero continuous function satisfying $f(t)=f(1-t)$ is an eigenfunction associated to the eigenvalue $0$. We replace $t$ by $1-t$ in equation (\ref{eq:eig-0}) and get 
\[f(t)=(2\lambda e^{-\frac{\pi i}{k}} + 1)f(1-t)=(2\lambda e^{-\frac{\pi i}{k}} + 1)^2f(t).\] Choose a point $t_0 \in [0,1]$ such that $f(t_0)\ne 0$. Hence, $(2\lambda e^{-\frac{\pi i}{k}} + 1)^2=1$, which implies $2\lambda e^{-\frac{\pi i}{k}} + 1=\pm 1$. If $2\lambda e^{-\frac{\pi i}{k}} + 1=1$ then $\lambda=0$ and if $2\lambda e^{-\frac{\pi i}{k}} + 1=-1$, then $\lambda= -e^{\frac{i\pi}{k}}=\omega^{k+1}.$ If $\lambda= -e^{\frac{i\pi}{k}}$, then from equation (\ref{eq:eig-0}), the eigenspace associated to $\lambda=-e^{\frac{i\pi}{k}}$ is the set $\{f \in C[0,1]: f(1-t)=-f(t)\}$. Therefore, $\{0,\omega^{k+1}\} \subseteq \sigma(T).$ It remains to show that for $\lambda \notin \{0,\omega^{k+1}\}$, the operator $T-\lambda I$ is invertible. For $ \lambda \notin \{0,\omega^{k+1}\} $, using Lemma~\ref{computation-of-inverse}, we have
\[(T - \lambda I)^{-1} = -\left[\frac{I}{\lambda}+\left(\frac{1}{\lambda(\lambda^{2k}-1)}\right)T^{2k} + \sum_{j=1}^{2k-1} \frac{\lambda^{2k - j-1}}{(\lambda^{2k-1} - 1)} T^j \right],\]
which shows $(T-\lambda I)$ is a bounded, and invertible operator on $X$. Hence, $\sigma(T) = \{0,\omega^{k+1}\}$. 
\vspace{1mm} 
Thus the operator $T$ has spectral decomposition (see \autoref{resultonnpotentdecomposition} and \autoref{thm:spectrumandeigenprojection}):
\[
T = 0\cdot P_{\{0\}}+ \omega^{k+1}P_{\{\omega^{k+1}\}}=\omega^{k+1}P_{\{\omega^{k+1}\}},
\]
with each projection $ P_{\{\lambda\}}$ given by:
\begin{align*}
& P_0 = I - T^{2k},\\
& P_{\{\omega^{k+1}\}} = \frac{1}{2k} \sum_{i=1}^{2k} \omega^{-i(k+1)} T^{i}. \tag{3}
\end{align*}
Since $X=\operatorname{Ker}(T)\bigoplus \operatorname{Ker}(T-\omega^{k+1}I)$, we note that the operator $T$ acts on the subspaces as follows: 
  \[
Tf =
\begin{cases}
0,        & f \in \operatorname{Ker}(T),\\
\omega^{k+1}f, & f \in \operatorname{Ker}(T-\omega^{k+1}I).
\end{cases}
\] 
A simple computation reveals that the Riesz projection $P_{\{\omega^{k+1}\}}$ acts on $X$ as follows:
  \[
P_{\{\omega^{k+1}\}}f =
\begin{cases}
0,        & f \in \operatorname{Ker}(T),\\
f, & f \in \operatorname{Ker}(T-\omega^{k+1}I),
\end{cases}
\] 
and hence $P_{\{\omega^{k+1}\}}=T^{2k}$. On the other hand,

  \[
(I-T^{2k})f =
\begin{cases}
f,        & f \in \operatorname{Ker}(T),\\
0, & f \in \operatorname{Ker}(T-\omega^{k+1}I),
\end{cases}
\] 
and therefore, $(I-T^{2k})$ is a projection onto the eigenspace associated with the eigenvalue zero and hence the eigenprojection.

\end{example}
\begin{example}
    Let $n \ge 2$ and $\omega=e^{\frac{2\pi i}{n-1}}$ be the $(n-1)$-th primitive root of unity. Define an operator $T \in \mathcal{B}(\mathbb{C}^{n-1})$ as:
\[
T=
\begin{pmatrix}
0      & 0      & 0      & 0      & \cdots & 0 \\
0      & 1      & 0      & 0      & \cdots & 0 \\
0      & 0      & \omega & 0      & \cdots & 0 \\
0      & 0      & 0      & \omega^{2} & \cdots & 0 \\
\vdots & \vdots & \vdots & \vdots & \ddots & \vdots \\
0      & 0      & 0      & 0      & \cdots & \omega^{\,n-2}
\end{pmatrix}.
\]
Then $T$ is an $n$-potent operator. Moreover, $\sigma(T)=\{0,1, \omega, \cdots, \omega^{n-2}\}$. Each eigenvalue is semisimple and the corresponding Riesz projections are the coordinate projections:
\[
P_{\{\lambda_j\}}=\operatorname{diag}(0,\dots,0,1,0,\dots,0),
\]
where $1$ is in the $j^{\text{th}}$ position.

\end{example}

\subsection{Projections in $\operatorname{comb}(T)$ for a $5$-potent operator $T$ and related discussions}\label{end-of-paper discussion}

We now provide a list of projections $P \in \operatorname{comb}(T)$ for a 5-potent operator $T$. Note that from \autoref{SpecOfnPotentOpProp}, $\sigma(T) \subseteq \{0,1,i,-1,-i\}$ with associated Riesz projections $P_{\{0\}}, P_{\{1\}},P_{\{i\}},P_{\{-1\}},P_{\{-i\}}$. By \autoref{thm:proj-conv-comb-result}, every projection $P \in \operatorname{comb}(T)$ is determined by a subset $S \subseteq \{1,i,-1,-i\}$, and is given by \[P=\sum_{\lambda \in S}P_{\{\lambda\}}.\] So, $\operatorname{comb}(T)$ contains exactly $2^4=16$ distinct projections. The following table lists several subsets $S$ and associated projections in $\operatorname{comb}(T)$. The other projections can be found similarly. 
\begin{table}[h]
\centering

\begin{tabular}{c|c}
$S\subseteq \{1,i,-1,-i\}$ & Projection in $\operatorname{comb}(T)$\\
\hline
$\emptyset$   & $0$  \\
$\{1\}$   & $P_{\{1\}}$  \\
$\{i\}$ & $P_{\{i\}}$ \\
$\{-1\}$ & $P_{\{-1\}}$ \\
$\{-i\}$ & $P_{\{-i\}}$ \\
$\{i,-1\}$ & $P_{\{i\}} + P_{\{-1\}}$ \\
$\{1,-i\}$ & $P_{\{1\}} + P_{\{-i\}}$ \\
$\{1,i,-1,-i\}$ & $I - P_{\{0\}}$\\
\end{tabular}
\caption{Projections in $\operatorname{comb}(T)$ for a $5$-potent operator}
\end{table}

We will compute the projection $P \in \operatorname{comb}(T)$ corresponding to the subset $\{i,-1\}$. We start by illustrating the computation of finding the Riesz projections associated with $P_{\{i\}}$ and $P_{\{-1\}}$ followed by finding the scalars $\beta_{\lambda}$. 

\medskip

Let $T \in \mathcal{B}(X)$ be a $5$-potent operator and hence $\sigma(T) \subseteq \{0,1,i,-1,-i\}$. We first compute the Riesz projection associated with the eigenvalues $\lambda=i,-1$. From equation~(\ref{eq:riesz-projection-eq-1}), we get 
\[P_{\{\lambda\}}=\frac{1}{2\pi i} \int_{\Gamma_{\lambda}} \left[ \frac{I}{z} + \bigg(\frac{1}{z(z^{4}-1)}\bigg)T^{4}+\sum_{k=1}^{3} \left( \frac{z^{3-k}}{z^{4} - 1} \right) T^k \right] \, dz,\]where $\Gamma_{\lambda}$ is the Cauchy contour enclosing only $\lambda$ in its interior. Evaluating the integral corresponding to $\lambda=i$ and $\lambda=-1$ we get 
\begin{align*}
    P_{\{i\}}&= -\frac{i}{4}T-\frac{1}{4}T^2+\frac{i}{4}T^3+\frac{1}{4}T^4,\\
    P_{\{-1\}} &= -\frac{1}{4}T+\frac{1}{4}T^2-\frac{1}{4}T^3+\frac{1}{4}T^4.
\end{align*}
Therefore, corresponding to the subset $S=\{i,-1\} \subseteq \{0,1,i,-1,-i\}$, the projection $P\in \operatorname{comb}(T)$ is given by:
\[P:= P_{\{i\}}+P_{\{-1\}}= \left(-\frac{1}{4}-\frac{i}{4}\right)T+\left(-\frac{1}{4}+\frac{i}{4}\right)T^3+\frac{1}{2}T^4.\]

\medskip
Conversely, suppose we are given the projection 
\begin{equation}\label{eq:proj-for-5-potent-in-comb(T)}
    P=\left(-\frac{1}{4}-\frac{i}{4}\right)T+\left(-\frac{1}{4}+\frac{i}{4}\right)T^3+\frac{1}{2}T^4.
\end{equation}
We compare the representation of $P$ with the following decomposition described in (\autoref{thm:proj-conv-comb-result}):
\[P= \sum_{\lambda \in \{1,i,-1,-i\}}\beta_{\lambda}P_{\{\lambda\}}, \quad \beta_{\lambda}=\sum_{k=1}^{4}a_k\lambda^k.\]

From equation~(\ref{eq:proj-for-5-potent-in-comb(T)}) we identify that $a_1=\left(-\frac{1}{4}-\frac{i}{4}\right), a_2=0, a_3=\left(-\frac{1}{4}+\frac{i}{4}\right), a_4=\frac{1}{2}$. We compute $\beta_{\lambda} = \sum_{j=1}^4 a_j \lambda^j$, corresponding to
		$\lambda\in\{0,1,i,-1,-i\}$:
        \begin{itemize}
            \item $\beta_{0}=0.$
            \item $\beta_{1}=\sum_{j=1}^{4}a_j=0.$
            \item $\beta_{i}=\sum_{j=1}^{4}a_ji^{j}=1.$
            \item $\beta_{-1}=\sum_{j=1}^{4}a_j(-1)^{j}=1.$
            \item $\beta_{-i}=\sum_{j=1}^{4}a_j(-i)^{j}=0.$
        \end{itemize}
        Therefore, by \autoref{thm:proj-conv-comb-result}, we have
        \[P=\left(-\frac{1}{4}-\frac{i}{4}\right)T+\left(-\frac{1}{4}+\frac{i}{4}\right)T^3+\frac{1}{2}T^4=P_{\{i\}}+P_{\{-1\}}.\]
This example illustrates how the projections $P \in \operatorname{comb}(T)$ arise associated to a subset $S \subseteq \sigma(T)$ and also conversely, given a projection $P \in \operatorname{comb}(T)$ how the coefficients $\beta_{\lambda}$ recover the corresponding subset. 

\section*{Acknowledgment}
The authors would like to thank Fernanda Botelho for valuable discussions and suggestions that improved this work.

\end{document}